\author{Andreas Basse-O'Connor, Mikkel Slot Nielsen,
Jan Pedersen, \\ and Victor Rohde \\
Department of Mathematics \\
Aarhus University \\
\{basse, mikkel, jan, victor\}@math.au.dk
}
\date{}
\newcommand{\FF}{\mathcal{F}} 
\newcommand{\EE}{\mathbb{E}} 
\DeclareMathOperator{\Real}{Re}
\newcommand{\RR}{\ensuremath{\mathbb{R}}}           % The real numbers
\newcommand{\CC}{\ensuremath{\mathbb{C}}}
\newcommand{\LL}{\mathcal{L}}
\theoremstyle{plain}
\newtheorem{lemma}{Lemma}[section]
\newtheorem{proposition}[lemma]{Proposition}
\newtheorem{theorem}[lemma]{Theorem}
\newtheorem{corollary}[lemma]{Corollary}
\theoremstyle{definition}
\newtheorem{example}[lemma]{Example}
\theoremstyle{definition}
\theoremstyle{remark}
\newtheorem{remark}[lemma]{Remark}
\newcommand{\devnull}[1]{}
\numberwithin{equation}{section}
\begin{document}

%\begin{frontmatter}

\title{Multivariate stochastic delay differential equations and CAR representations of \\
CARMA processes}
%\runtitle{A continuous-time framework for ARMA processes}

%\begin{aug}
% indicate corresponding author with \corref{}
% \author{\fnms{John} \snm{Smith}\thanksref{a}\corref{}\ead[label=e1]{smith@foo.com}\ead[label=e2,url]{www.foo.com}}
% \address[a]{\printead{e1};\printead{e2}}

%\author{\fnms{Andreas} \snm{Basse-O'Connor,}\thanksref{a}\ead[label=e1]{\{basse, mikkel, jan, victor\}@math.au.dk}}
%\author{\fnms{Mikkel Slot} \snm{Nielsen,}\thanksref{a}\ead[label=e2]{mikkel@math.au.dk}}
%\author{\fnms{Jan} \snm{Pedersen,}\thanksref{a}\ead[label=e3]{jan@math.au.dk}}
%\and
%\author{\fnms{Victor} \snm{Rohde}\thanksref{a}\ead[label=e4]{victor@math.au.dk}}
%\runauthor{A. Basse-O'Connor, M. Nielsen, J. Pedersen, and V. Rohde}

%\address[a]{Department of Mathematics, Aarhus University, Ny Munkegade 118, 8000 Aarhus C, Denmark. \printead{e1}}
%\address[b]{\printead{e2}}

%\affiliation{Aarhus University}

%\end{aug}

\maketitle

\begin{abstract}

In this study we show how to represent a continuous time autoregressive moving average (CARMA) as 
a higher order stochastic delay differential equation, which may be thought of as a continuous-time equivalent of the AR($\infty$) representation. Furthermore, we show how this representation gives rise to a prediction formula for CARMA processes. To be used in the above mentioned results we develop a general theory for multivariate stochastic delay differential equations, which will be of independent interest, and which will have particular focus on existence, uniqueness and representations. 
%
%
%consider multi-dimensional stochastic delay differential equations (MSDDEs) with infinite delay and study existence, uniqueness, and structure of solutions. Furthermore, we give a prediction formula for solutions to MSDDEs. The study of MSDDEs leads naturally to a definition of the nested higher order MSDDEs, and we show that any integrable and invertible (possibly multivariate and fractionally integrated) continuous-time ARMA process can be represented as a higher order MSDDE. Representing an invertible MCARMA process as an MSDDE is the continuous-time equivalent of representing an invertible ARMA process as an AR process. %An MSDDE representation provides an explicit recipe to recover the background noise from the observed process and gives a dynamic interpretation.
\\
\\
\noindent \footnotesize \textit{AMS 2010 subject classifications:} 60G05; 60G22; 60G51; 60H05; 60H10
\\
\\
\noindent \textit{Keywords:} multivariate stochastic delay differential equations; multivariate Ornstein-Uhlenbeck processes; CARMA processes; FICARMA processes; MCARMA processes; noise recovery; prediction; long memory

\end{abstract}

%\begin{keyword}
%\kwd{continuous-time ARMA model}
%\kwd{stochastic delay differential equations}
%\kwd{processes of Ornstein-Uhlenbeck type}
%\kwd{long-range dependence}
%\kwd{CARMA processes}
%\kwd{moving averages}
%\end{keyword}

% history:
% \received{\smonth{1} \syear{0000}}

%\tableofcontents

%\end{frontmatter}

\section{Introduction and  main ideas}

The class of autoregressive moving averages (ARMA) is one of the most popular classes of stochastic processes for modeling time series in discrete time. This class goes back to the thesis of Whittle in 1951 and was popularized in \citet{Box-Jenkins}. The continuous time analogue of an ARMA process is called a CARMA process, and it is the formal solution $(X_t)_{t\in \RR}$ to the equation 
\begin{align}\label{MCARMAheureqn}
P(D) X_t = Q(D) DZ_t,\quad t \in \mathbb{R},
\end{align}
where $P$ and $Q$ are polynomials of degree $p$ and $q$, respectively. Furthermore, $D$ denotes differentiation with respect to $t$, and $(Z_t)_{t\in \RR}$ is a L\'evy process, the continuous time analogue of a random walk. In the following we will assume that $p>q$ and $P(z),Q(z)\neq 0$ whenever $\text{Re}(z)\geq 0$. In this case one can give precise meaning to $(X_t)_{t\in \RR}$ as a causal stochastic process through a state-space representation as long as $(Z_t)_{t\in \mathbb{R}}$ has log moments. L\'{e}vy-driven CARMA processes have found many applications, for example, in modeling temperature, electricity and stochastic volatility, cf. \cite{BenthBenthPrice,GarciaKluppelbergMullerElectricity,TodorovStochasticVol}. Moreover, there exists a vast amount of literature on theoretical results for CARMA processes (and variations of these), and a few references are \cite{brockwellRecent,BrockwellMarquardt,BrockwellLevyCARMA,nonNegCARMA,marquardtMFICARMA,MarquardtStelzer,stelzer2011carma}.

It is well-known that any causal CARMA process has a continuous time moving average representation of CMA($\infty$) type
\begin{align*}
X_t = \int_{-\infty}^t g(t-u)\,dZ_u,\qquad t\in\RR, 
\end{align*}
see the references above or Section~\ref{CARMArelation}. This representation may be very convenient for studying many of their properties. A main contribution of our work is that we obtain a CAR($\infty$) representation of CARMA processes of the form
\begin{align}\label{ARrep}
R(D)X_t = \int_0^\infty X_{t-u}f(u)\, du  + DZ_t,\quad t \in \mathbb{R},
\end{align} 
where $R$ is a polynomial of order $p-q$ and $f:\RR\to \RR$ is a deterministic function, both defined through $P$ and $Q$. Since $(X_t)_{t\in \mathbb{R}}$ is $p-q-1$ times differentiable, see \cite[Proposition~3.32]{MarquardtStelzer}, the relation \eqref{ARrep} is well-defined if we integrate both sides once. A heuristic argument for obtaining \eqref{ARrep} from \eqref{MCARMAheureqn} is as follows. If $q=0$, $Q$ is constant and \eqref{ARrep} holds with $R=P$ and $f =0$. If $q\geq 1$, it is convenient to rephrase \eqref{MCARMAheureqn} in the frequency domain:
\begin{align}\label{frequencyEq}
\frac{P(-iy)}{Q(-iy)}\mathcal{F}[X](y) = \mathcal{F}[DL](y),\quad y  \in \mathbb{R}.
\end{align}
Using polynomial long division we may choose a polynomial $R$ of order $p-q$ such that
\begin{align*}
S(z):= Q(z) R(z)-P(z),\qquad z\in \CC, 
\end{align*}
is a polynomial of at most order $q-1$. Now observe that
\begin{align*}
\frac{P(-iy)}{Q(-iy)}\mathcal{F}[X](y)
&= \biggr(R(-iy) - \frac{S(-iy)}{Q(-iy)} \biggr) \mathcal{F}[X](y)\\
&= \mathcal{F}[R(D)X](y) - \mathcal{F}[f](y) \mathcal{F}[X](y),
\end{align*}
where $f:\mathbb{R}\to \mathbb{R}$ is the $L^2$ function characterized by $\mathcal{F}[f](y) = S(-iy)/Q(-iy)$ for $y\in \mathbb{R}$. (In fact, we even know that $f$ is vanishing on $(-\infty,0)$ and decays exponentially fast at $\infty$, cf. Remark~\ref{fComputation}.) Combining this identity with \eqref{frequencyEq} results in the representation \eqref{ARrep}.

We show in Theorem~\ref{MCARMAasMSDDE} that \eqref{ARrep} does indeed hold true for any invertible (L\'{e}vy-driven) CARMA process. Similar relations are shown to hold for invertible fractionally integrated CARMA (FICARMA) processes, where $(Z_t)_{t\in \mathbb{R}}$ is a fractional L\'{e}vy process, and also for their multi-dimensional counterparts, which we will refer to as MCARMA and MFICARMA processes, respectively. We use these representations to obtain a prediction formula for general CARMA type processes (see Corollary~\ref{CARMAprediction}). A prediction formula for invertible one-dimensional L\'{e}vy-driven CARMA processes is given in \cite[Theorem 2.7]{brockwell2015prediction}, but prediction of MCARMA processes has, to the best of our knowledge, not been studied in the literature.

Autoregressive representations such as \eqref{ARrep} are useful for several reasons. To give a few examples, they separate the noise $(Z_t)_{t\in \RR}$ from $(X_t)_{t\in\RR}$ and hence provide a recipe for recovering increments of the noise from the observed process, they ease the task of prediction (and thus estimation), and they clarify the dynamic behavior of the process. These facts motivate the idea of defining a broad class of processes, including the CARMA type processes above, which all admit an autoregressive representation, and it turns out that a well-suited class to study is the one formed by solutions to multi-dimensional stochastic delay differential equations (MSDDEs). To be precise, for an integrable $n$-dimensional (measurable) process $Z_t=(Z_t^1,\dots,Z_t^n)^T$, $t \in \RR$, with stationary increments and a finite signed $n \times n$ matrix-valued measure $\eta$, concentrated on $[0,\infty)$, a stationary process $X_t = (X^1_t,\dots, X^n_t)^T$, $t \in \mathbb{R}$, is a solution to the associated MSDDE if it satisfies
\begin{align}\label{MultiSDDEcompact}
dX_t = \eta \ast X (t)\, dt + dZ_t.
\end{align}
By equation \eqref{MultiSDDEcompact} we mean that
\begin{align}\label{MSDDE}
X^j_t -X^j_s= \sum_{k=1}^n \int_s^t \int_{[0,\infty)} X^k_{u-v}\, \eta_{jk}(dv)\, du + Z^j_t - Z^j_s , \quad j = 1,\dots, n,
\end{align}
almost surely for each $s<t$. This system of equations is an extension of the stochastic delay differential equation (SDDE) in \cite[Section~3.3]{contARMAframework} to the multivariate case. The overall structure of \eqref{MultiSDDEcompact} is also in line with earlier literature such as \cite{GK,Mohammed} on univariate SDDEs, but here we allow for infinite delay ($\eta$ is allowed to have unbounded support) which is a key property in order to include the CARMA type processes in the framework.

%In discrete-time, an ARMA equation can be heuristically to get an AR($\infty$) representation of the equation. We will begin by that a similar heuristic approach can be made 
%Consider a one-dimensional integrable L\'{e}vy process $(Z_t)_{t \in \RR}$ and let $P$ and $Q$ be polynomials of order $p$ and $q$, respectively. 

%\begin{align}\label{ARrep}
%R(D)X_t = \int_0^\infty X_{t-u}f(u)\, du  + DZ_t,
%\end{align} 
%which may be viewed as an AR representation of (\ref{MCARMAheureqn}). 

The structure of the paper is as follows: In Section~\ref{prel} we introduce the notation used throughout this paper. Next, in Section~\ref{sdfsdf}, we develop the general theory for MSDDEs with particular focus on existence, uniqueness and prediction.  The general results of Section~\ref{sdfsdf} are then specialized in Section~\ref{sectionMSDDE} to various settings. Specifically, in Section~\ref{RegularIntegrator} we consider the case where the noise process gives rise to a reasonable integral, and in Section~\ref{hOrderSection} we demonstrate how to derive results for higher order SDDEs by nesting them into MSDDEs. Finally, in Section~\ref{CARMArelation} we use the above mentioned findings to represent CARMA processes and generalizations thereof as solutions to higher order SDDEs and to obtain the corresponding prediction formulas.

\section{Notation}\label{prel}
Let $f:\mathbb{R}\to \mathbb{C}^{m\times k}$ be a measurable function and $\mu$ a $k \times n$ (non-negative) matrix measure, that is, 
\begin{align*}
\mu = \begin{bmatrix} \mu_{11} & \cdots & \mu_{1n} \\ \vdots & \ddots & \vdots \\
\mu_{k1} & \cdots & \mu_{kn}
\end{bmatrix}
\end{align*}
where each $\mu_{jl}$ is a measure on $\mathbb{R}$. Then, we will write $f\in L^p(\mu)$ if
\begin{align*}
\int_\mathbb{R}\vert f_{il}(u) \vert^p  \mu_{lj}  (du) < \infty
\end{align*}
for $l=1,\dots, k$, $i=1,\dots, m$ and $j=1,\dots, n$. Provided that $f\in L^1(\mu)$, we set
\begin{align}\label{fIntegral}
\int_\mathbb{R} f(u)\, \mu (du) = \sum_{l=1}^k\begin{bmatrix}  \int_\mathbb{R} f_{1l}(u)\, \mu_{l1}(du) & \cdots &  \int_\mathbb{R} f_{1l}(u)\, \mu_{ln}(du) \\
\vdots & \ddots & \vdots \\
 \int_\mathbb{R} f_{ml}(u)\, \mu_{l1}(du) & \cdots &  \int_\mathbb{R} f_{ml}(u)\, \mu_{ln}(du)
\end{bmatrix}.
\end{align}
If $\mu$ is the Lebesgue measure, we will suppress the dependence on the measure and write $f \in L^p$, and in case $f$ is measurable and bounded Lebesgue almost everywhere, $f \in L^\infty$. For two (matrix) measures $\mu^+$ and $\mu^-$ on $\mathbb{R}$, where at least one of them are finite, we call the set function $\mu (B) := \mu^+ (B) - \mu^- (B)$, defined for any Borel set $B$, a signed measure (and, from this point, simply referred to as a measure). We may and do assume that the two measures $\mu^+$ and $\mu^-$ are singular. To the measure $\mu$ we will associate its variation measure $\vert \mu\vert := \mu^+ + \mu^-$, and when $\vert \mu \vert (\mathbb{R})< \infty$, we will say that $\mu$ is finite. Integrals with respect to $\mu$ are defined in a natural way from \eqref{fIntegral} whenever $f\in L^1 (\mu):=L^1(\vert \mu \vert)$. If $f$ is one-dimensional, respectively if $\mu$ is one-dimensional, we will write $f\in L^1(\mu)$ if $f \in L^1 (\vert \mu_{ij} \vert)$ for all $i=1,\dots, k$ and $j=1,\dots, n$, respectively if $f_{ij}\in L^1(\vert \mu \vert)$ for all $i=1,\dots,m$ and $j=1,\dots, k$. The associated integral is defined in an obvious manner. 

We define the convolution at a given point $t\in \mathbb{R}$ by  
\begin{align*}
f \ast \mu (t) = \int_\mathbb{R} f(t-u)\mu (du)
\end{align*}
provided that $f(t-\cdot)\in L^1( \mu )$. In case that $\mu$ is the Lebesgue-Stieltjes measure of a function $g:\mathbb{R}\to \mathbb{R}^{k\times n}$ we will also write $f\ast g (t)$ instead of $f\ast \mu (t)$ (not to be confused with the standard convolution between functions). For a given measure $\mu$ we set 
\begin{align*}
D(\mu) = \biggr\{ z \in \mathbb{C}\, :\, \int_\mathbb{R}e^{\text{Re}(z)u}\, \vert \mu_{ij}\vert (du)< \infty \quad \text{ for } i=1,\dots, k\  \text{and}\ j=1,\dots, n  \biggr\}
\end{align*}
and define its Laplace transform $\mathcal{L}[\mu]$ as
\begin{align*}
\mathcal{L}[\mu]_{ij}(z) = \int_\mathbb{R}e^{zu}\, \mu_{ij} (du), \quad \text{for} \quad i=1,\dots,k, \: j=1,\dots,n,
\end{align*}
for every $z \in D(\mu)$. If $\mu$ is a finite measure, we will also refer to the Fourier transform $\mathcal{F}[\mu]$ of $\mu$, which is given as $\mathcal{F}[\mu](y) = \mathcal{L}[\mu](iy)$ for $y \in \mathbb{R}$. If $\mu (du) = f(u)\, du$ for some measurable function $f$, we write $\mathcal{L}[f]$ and $\mathcal{F}[f]$ instead. We will also use that the Fourier transform $\mathcal{F}$ extends from $L^1$ to $L^1 \cup L^2$, and it maps $L^2$ onto $L^2$. We will say that $\mu$ has a moment of order $p \in \mathbb{N}_0$ if
\begin{align*}
\int_\mathbb{R}\vert u \vert^p\, \vert\mu_{jk}\vert (du)< \infty
\end{align*}
for all $j,k = 1,\dots, n$. Finally, for two functions $f,g:\mathbb{R}\to \mathbb{R}$ and $a \in [-\infty,\infty]$, we write $f (t) = o(g(t))$, $f(t) \sim g(t)$ and $f(t) = O(g(t))$ as $t \to a$ if
\begin{align*}
\lim_{t\to a} \frac{f(t)}{g(t)}\to 0,\quad \lim_{t\to a}\frac{f(t)}{g(t)} = 1\quad \text{and}\quad \limsup_{t\to a} \biggr\vert\frac{f(t)}{g(t)} \biggr\vert < \infty,
\end{align*} 
respectively.

%%%%%%%%%%%%%%%%
\section{Stochastic delay differential equations}\label{sdfsdf}
%%%%%%%%%%%%%%%%

 %If one is to construct a process starting from the dynamic representation (\ref{MSDDEcompact}), the latter feature may seem odd (and even undesirable), but if the process $(Y_t)_{t\in \mathbb{R}}$ is given a priori, and the task is to detect the dynamic relation (e.g., in order to estimate the underlying noise), one may accept this dependence. 

% By studying MSDDEs we can extend the result in \cite[Theorem 3.11]{contARMAframework} that a CARMA($p$,$p-1$) has an Lévy driven SDDE representation to any CARMA($p$,$q$), $p>q$, has an Lévy driven MSDDE representation. 

Consider the general MSDDE in \eqref{MultiSDDEcompact}, where the noise $(Z_t)_{t\in \mathbb{R}}$ is a measurable process, which is integrable and has stationary increments. The first main result provides sufficient conditions to ensure existence and uniqueness of a solution. To obtain such results we need to put assumptions on the delay measure $\eta$. In order to do so, we associate to $\eta$ the function $h :  D(\eta) \to \CC^{n\times n}$ given by 
\begin{align}\label{DefOfh}
h(z) = -zI_n - \LL[\eta](z).
\end{align} 
where $I_n$ is the $n\times n$ identity matrix.

\begin{theorem}\label{existence} Let $h$ be given in \eqref{DefOfh} and suppose that $\det (h(iy)) \neq 0$ for all $y \in \RR$. Suppose further that $\eta$ has second moment. Then there exists a function $g:\mathbb{R}\to \mathbb{R}^{n \times n}$ in $L^2$ characterized by
\begin{align}\label{gKernelChar1}
\mathcal{F}[g](y) = h(iy)^{-1},
\end{align}
the convolution
\begin{align}\label{solutionForm}
g \ast Z (t) := Z_t + \int_\mathbb{R} g \ast \eta (t-u)\, Z_u\, du
\end{align}
is well-defined for each $t \in \mathbb{R}$ almost surely, and $X_t = g \ast Z (t)$, $t \in \mathbb{R}$, is the unique (up to modification) stationary and integrable solution to \eqref{MultiSDDEcompact}. If, in addition to the above stated assumptions, $\det (h(z)) \neq 0$ for all $z \in \CC$ with $\Real (z) \leq 0$ then the solution in \eqref{solutionForm} is casual in the sense that $(X_t)_{t\in \mathbb{R}}$ is adapted to the filtration
\begin{align*}
 \{\sigma (Z_t - Z_s\, :\, s<t)\}_{t \in \mathbb{R}}.
\end{align*}
\end{theorem}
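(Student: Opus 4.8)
The natural strategy is a Fourier-domain argument. First I would establish the existence of the kernel $g \in L^2$: since $\det(h(iy)) \neq 0$ for all $y \in \RR$, the matrix $h(iy)^{-1}$ is well-defined pointwise, and the second moment assumption on $\eta$ guarantees that $y \mapsto \LL[\eta](iy)$ is $C^2$ with controlled growth, so that $h(iy)^{-1}$ behaves like $(iy)^{-1}I_n$ for large $|y|$ and hence lies in $L^2(\RR;\CC^{n\times n})$. Applying the (matrix-valued) inverse Fourier transform yields a real-valued $g \in L^2$ satisfying \eqref{gKernelChar1}; reality of $g$ follows from the Hermitian symmetry $h(-iy)^{-1} = \overline{h(iy)^{-1}}$, which in turn comes from $\eta$ being a real (signed) measure. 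I would also record here that $g \ast \eta \in L^2$ (its Fourier transform is $h(iy)^{-1}\mathcal{F}[\eta](iy)$, which decays like $|y|^{-1}$) so the integral in \eqref{solutionForm} makes sense: $g \ast \eta \in L^2 \subseteq L^1 + L^2$ convolved against an integrable stationary-increment process should be controlled by a standard moment estimate, noting $\EE|Z_u| \le C(1+|u|)$ so one needs the $L^1$-in-$u$ weighted integrability of $g\ast\eta$, which I would get from the decay of its Fourier transform (or from the exponential tail bound alluded to in the causal case).

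Next, the verification that $X_t := g \ast Z(t)$ solves \eqref{MSDDE}. The clean way is to test against the equation: one wants to show $X_t - X_s = \int_s^t \eta \ast X(u)\,du + Z_t - Z_s$. I would substitute the definition \eqref{solutionForm}, interchange the order of integration (justified by Fubini using the integrability just established), and reduce the identity to a purely deterministic kernel identity, namely that $g - \delta_0 I_n = (g\ast\eta) \ast (\text{something})$ encoding $h(iy)^{-1} - I_n = h(iy)^{-1}\LL[\eta](iy) \cdot (iy)^{-1}$, i.e. the defining relation $h(iy)g\widehat{\ }(y) = I_n$ rearranged as $-iy\,\mathcal{F}[g](y) = I_n + \mathcal{F}[\eta](iy)\mathcal{F}[g](y)$. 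Taking inverse Fourier transforms and integrating from $s$ to $t$ turns the $-iy$ factor into the increment operator and gives exactly the MSDDE. Stationarity of $X$ is immediate from stationarity of the increments of $Z$ together with the convolution structure, and integrability of $X$ follows from the weighted-$L^1$ bound on $g \ast \eta$ together with $\EE|Z_0| < \infty$ and linear growth of $\EE|Z_u|$.

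For uniqueness: if $Y$ is another stationary integrable solution, then $Y - X$ (after subtracting the equations) satisfies the homogeneous equation $d(Y_t - X_t) = \eta \ast (Y-X)(t)\,dt$ pathwise; I would argue that a stationary integrable solution of the homogeneous equation must vanish. The argument is again spectral: one shows the spectral measure (or the characteristic function in a suitable sense) of the stationary process $W := Y - X$ must be supported where $\det h(iy) = 0$, which is empty by hypothesis, forcing $W \equiv 0$ up to modification. Concretely I would convolve the homogeneous relation with a smooth compactly supported function, use that $W$ is stationary with finite first moment to make sense of $\mathcal{F}$ in the distributional/second-order sense, and derive $h(iy)\,\widehat{W}(y) = 0$.

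The main obstacle, and the part deserving genuine care, is the causality claim. Assuming $\det h(z) \neq 0$ for $\Real(z) \le 0$, I want $g$ to be supported on $[0,\infty)$ so that $g \ast Z(t)$ depends only on $\{Z_t - Z_s : s < t\}$ (modulo the constant which cancels in increments). The plan is a Paley–Wiener type argument: $z \mapsto h(z)^{-1}$ is analytic on the closed left half-plane $\{\Real(z)\le 0\}$ — here I use that $\LL[\eta]$ extends analytically there because $\eta$ is concentrated on $[0,\infty)$, together with the no-zeros hypothesis — and it decays like $|z|^{-1}$ uniformly in that half-plane. Hence $h(iy)^{-1}$ is the boundary value of a function in the Hardy space of the left half-plane, and by the (matrix/vector-valued) Paley–Wiener theorem its inverse Fourier transform $g$ vanishes on $(-\infty,0)$. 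Some bookkeeping is needed to handle the fact that $h(iy)^{-1}$ is only in $L^2$ (not $L^1$), so the Hardy-space/$H^2$ version of Paley–Wiener is the right tool; and one must separately confirm the uniform decay estimate in the half-plane, which follows from writing $h(z)^{-1} = \mathrm{adj}(h(z))/\det(h(z))$ and controlling numerator and denominator using the second-moment (hence $C^2$, polynomially bounded) behaviour of $\LL[\eta]$ on $\{\Real(z) \le 0\}$. Once $\supp g \subseteq [0,\infty)$ is in hand, adaptedness to the stated filtration is a routine measurability statement about the convolution integral.
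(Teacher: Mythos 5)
Much of your plan coincides with the paper's own route: the existence of $g\in L^2$ via the $O(|y|^{-1})$ decay of $h(iy)^{-1}$ and Fourier inversion, the verification that $g\ast Z$ solves \eqref{MSDDE} by reducing to the deterministic kernel identity $-iy\,\FF[g](y)=I_n+\FF[g](y)\FF[\eta](y)$ integrated from $s$ to $t$ (this is exactly Proposition~\ref{gExistence}(i), followed by a Fubini computation), and the causality claim via an $H^2$/Paley--Wiener argument in the left half-plane (Proposition~\ref{gExistence}(iv)). Two points, however, are genuine gaps as written. First, stationarity is \emph{not} immediate: \eqref{solutionForm} involves the values $Z_u$ themselves, not increments, and $(Z_t)_{t\in\RR}$ is not stationary. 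You need the identity $\int_\RR g\ast\eta(u)\,du=-I_n$ (equation \eqref{minusIdentity}; it follows once $g\in L^1$ and $g(t)\to 0$, or by evaluating transforms at $0$) to rewrite $X_t=\int_\RR g\ast\eta(u)\,[Z_{t-u}-Z_t]\,du$, and then an approximation argument (the paper uses Riemann sums converging in $L^1(\PP)$) to transfer stationarity of the increments to $X$. The same identity is what makes your parenthetical ``the constant cancels in increments'' true in the causality step, yet you never state or derive it. Relatedly, your mechanism for the weighted integrability of $g\ast\eta$ (``decay of its Fourier transform'') points the wrong way: what is actually used is that the second moment of $\eta$ makes $\FF[\eta]$, hence $h(i\cdot)^{-1}$, twice differentiable with derivatives decaying like $|y|^{-1}$, whence $t\mapsto t^2g(t)\in L^2$, so $g$ and $g\ast\eta$ lie in $L^1$ with first moment; the exponential-tail fallback you mention requires exponential moments of $\eta$, which are not assumed in the theorem.

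Second, the uniqueness sketch is the weakest part and does not go through as stated. The difference $W$ of two solutions is only stationary and bounded in $L^1(\PP)$, so there is no spectral measure, and the pathwise distributional Fourier argument needs several nontrivial justifications: almost-sure temperedness of the paths, a convolution theorem for $\eta\ast W$ with $\eta$ of unbounded support, and, most seriously, the inversion step from $h(i\cdot)\widehat{W}=0$ to $\widehat{W}=0$, which amounts to multiplying a distribution by $h(i\cdot)^{-1}$ -- a function that is merely $C^2$ under the standing assumptions -- so you would need a local-order bound on $\widehat{W}$, or a reduction to bounded deterministic solutions (e.g.\ $t\mapsto\EE[W_tA]$ for bounded $A$) combined with a Beurling-spectrum/spectral-synthesis argument. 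None of this is indicated, and ``forcing $W\equiv 0$'' is asserted rather than proved. The paper avoids all of this by a different device: a variation-of-constants formula (Lemma~\ref{variationOfConstants}), proved via Laplace transforms on a vertical strip $\{-\delta<\Real(z)<0\}$, which expresses $W_t$ as $g(t-s)W_s+\int_s^\infty g(t-u)\,\eta\ast\{\mathds{1}_{(-\infty,s]}W\}(u)\,du$; letting $s\to-\infty$ and using $g(t)\to 0$ together with dominated convergence kills the right-hand side. If you want to keep your spectral route, you must supply the missing regularity/inversion arguments; otherwise the variation-of-constants approach (which is also what powers the prediction formula in Theorem~\ref{Prediction}) is the more economical path.
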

The solution $(X_t)_{t\in \mathbb{R}}$ to \eqref{MultiSDDEcompact} will very often take form as a $(Z_t)_{t\in \mathbb{R}}$-driven moving average, that is,
\begin{align}\label{noiseMA}
X_t = \int_\mathbb{R}g(t-u)\, dZ_u
\end{align}
for each $t \in \mathbb{R}$ (cf. Section~\ref{RegularIntegrator}). This fact justifies the notation $g\ast Z$ introduced in \eqref{solutionForm}. In case $n=1$, equation \eqref{MultiSDDEcompact} reduces to the usual first order SDDE, and then the existence condition becomes $h(iy) = -iy - \mathcal{F}[\eta](y) \neq 0$ for all $y\in \mathbb{R}$, and the kernel driving the solution is characterized by $\mathcal{F}[g](y) = 1/h(iy)$. This is consistent with earlier literature (cf. \cite{contARMAframework,GK,Mohammed}). 

The second main result concerns prediction of MSDDEs. In particular, the content of the result is that we can compute a prediction of future values of the observed process if we are able to compute the same type of prediction of the noise.

\begin{theorem}\label{Prediction} Suppose that $\det (h(z)) \neq 0$ for all $z \in \CC$ with $\text{Re}(z)\leq 0$ and that $\eta$ has second moment. Furthermore, let $(X_t)_{t\in \mathbb{R}}$ be the stationary and integrable solution to \eqref{MultiSDDEcompact} and let $g$ be given by \eqref{gKernelChar1}. Fix $s<t$. Then, if we set
\begin{align}\label{PredictionOfNoise}
\hat{Z}_u = \mathbb{E}[Z_u-Z_s \mid Z_s -Z_r,\, r< s],\quad u >s,
\end{align}
it holds that
\begin{align*}
\MoveEqLeft \EE [X_t \mid X_u, \, u\leq s] \\
&=  g(t-s) X_s + \int_s^t g(t-u) \eta \ast \big\{\mathds{1}_{(-\infty,s]}X\big\} (u) \, du + g \ast \big\{\mathds{1}_{(s,\infty)}\hat{Z} \big\}(t),
\end{align*} 
using the notation
\begin{align*}
\big(\eta \ast \{\mathds{1}_{(-\infty,s]}X\}(u)\big)_j &:= \sum_{k=1}^n \int_{[u-s,\infty)} X^k_{u-v}\, \eta_{jk}(dv) \quad \text{and} \\
\big(g \ast \{\mathds{1}_{(s,\infty)}\hat{Z}\}(u)\big)_j &:= \sum_{k=1}^n \int_{[0,u-s)} \hat{Z}^k_{u-v}\, g_{jk}(dv)
\end{align*}
for $u>s$ and $j=1,\dots,n$.
\end{theorem}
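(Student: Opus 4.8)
\emph{Strategy and preliminaries.} The plan is to first establish a pathwise variation-of-constants representation of the solution restarted from its history at time $s$, whose kernel is exactly $g$, and then to condition on that history. Two facts about $g$ enter. Since $\mathcal{F}[g](y)h(iy)=I_n$ we have $-iy\,\mathcal{F}[g](y)=I_n+\mathcal{F}[g\ast\eta](y)$ for all $y\in\RR$, and because $\eta$ is finite $g\ast\eta\in L^2$; as $-iy\,\mathcal{F}[g](y)=\mathcal{F}[Dg](y)$ and $I_n=\mathcal{F}[\delta_0 I_n](y)$, inverting the Fourier transform in the sense of tempered distributions gives $Dg=\delta_0 I_n+g\ast\eta$. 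Combined with causality, under which $g$ vanishes on $(-\infty,0)$ by Theorem~\ref{existence}, this yields, for the right-continuous version,
\begin{align*}
g(t)=\mathds{1}_{[0,\infty)}(t)\,I_n+\int_0^t (g\ast\eta)(w)\,dw,\qquad t\in\RR,
\end{align*}
so $g$ is of locally bounded variation and the Lebesgue--Stieltjes integrals against $g$ in the statement make sense (the measure $g_{jk}(dv)$ carries an atom $\delta_{jk}$ at $0$). Secondly, \eqref{MSDDE} gives $Z_s-Z_r=(X_s-X_r)-\int_r^s\eta\ast X(u)\,du\in\sigma(X_u:u\le s)$ for every $r<s$, while causality gives $\sigma(X_u:u\le s)\subseteq\sigma(Z_w-Z_r:r<w\le s)=\sigma(Z_s-Z_r:r<s)$; hence the two $\sigma$-algebras coincide, and the $\hat{Z}$ of \eqref{PredictionOfNoise} equals $\EE[Z_u-Z_s\mid X_r,\,r\le s]$ for $u>s$.

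\emph{The deterministic core.} I claim that for every $t>s$, almost surely,
\begin{align*}
X_t={}&g(t-s)X_s+\int_s^t g(t-u)\,\eta\ast\{\mathds{1}_{(-\infty,s]}X\}(u)\,du\\
&+g\ast\{\mathds{1}_{(s,\infty)}(Z-Z_s)\}(t),
\end{align*}
where the last term has $j$th component $\sum_k\int_{[0,t-s)}(Z^k_{t-v}-Z^k_s)\,g_{jk}(dv)$. To prove this, let $\xi$ be the right-hand side for $t>s$ and set $\xi_t:=X_t$ for $t\le s$. Using the representation of $g$ above, Fubini's theorem, and integration by parts in the last term, one checks that $\xi$ satisfies the same equation as $X$, that is, $\xi_t-X_s=\int_s^t\eta\ast\xi(u)\,du+(Z_t-Z_s)$ for $t>s$ (for $u>s$ one splits $\eta\ast\xi(u)$ into the part coming from $\xi=X$ on $(-\infty,s]$, which equals $\eta\ast\{\mathds{1}_{(-\infty,s]}X\}(u)$, and the part coming from $\xi|_{(s,u]}$). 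Since $X$ obeys the same equation by \eqref{MSDDE}, $\delta:=\xi-X$ vanishes on $(-\infty,s]$ and satisfies $\delta_t=\int_s^t\eta\ast\delta(u)\,du$ for $t>s$, with $\eta\ast\delta(u)$ involving $\delta$ only on $(s,u)$; as $\eta$ is finite and $\delta$ is a.s.\ locally integrable, a Gr\"onwall argument forces $\delta\equiv0$, which establishes the claim.

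\emph{Conditioning.} Applying $\EE[\,\cdot\mid X_u,\,u\le s]$ to the displayed identity, the first two terms are $\sigma(X_u:u\le s)$-measurable and so are unchanged, while for the last term the conditional Fubini theorem --- valid because $(v,\omega)\mapsto Z_{t-v}(\omega)-Z_s(\omega)$ is jointly measurable and, by stationarity of the increments, integrability of $Z$, and local finiteness of $|dg|$, is integrable against $\PP\otimes|dg|$ on $\Omega\times[0,t-s)$ --- together with the $\sigma$-algebra identification produces $\sum_k\int_{[0,t-s)}\hat{Z}^k_{t-v}\,g_{jk}(dv)$, the $j$th component of $g\ast\{\mathds{1}_{(s,\infty)}\hat{Z}\}(t)$. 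Collecting the three terms yields the asserted formula. (Alternatively, one may condition in \eqref{MSDDE} first to obtain the corresponding delay equation for $m_t:=\EE[X_t\mid X_u,\,u\le s]$, with $\hat{Z}$ replacing $Z-Z_s$, and then invoke the same deterministic variation-of-constants representation.)

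\emph{Main obstacle.} The crux is the deterministic identity: handling the convolution algebra for the infinite-delay measure $\eta$, justifying the interchanges of integration, and correctly using the distributional relation $Dg=\delta_0 I_n+g\ast\eta$ --- with its jump at the origin --- when differentiating the candidate $\xi$, namely, showing that $g|_{[0,\infty)}$ is the fundamental solution of the delay system. Once this is in place, the $\sigma$-algebra identification and the conditional Fubini step are routine.
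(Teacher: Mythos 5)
Your argument is correct, and it reaches the conclusion by the same two-step structure as the paper (a pathwise variation-of-constants identity, then conditioning on $\sigma(X_u:u\le s)=\sigma(Z_s-Z_r:r<s)$), but the key identity is obtained by a genuinely different route. The paper isolates your ``deterministic core'' as Lemma~\ref{variationOfConstants} and proves it by Laplace-transform calculus: using $\mathcal{L}[g](z)=h(z)^{-1}$ on a vertical strip $\Real(z)\in(-\delta,0)$ (via a Hardy-space lemma), it transforms the restarted equation for $\mathds{1}_{(s,\infty)}X$ and inverts; notably this version needs no causality and reads with $\int_s^\infty$ in place of $\int_s^t$. You instead show that $g$ is the fundamental solution, i.e.\ $g(t)=\mathds{1}_{[0,\infty)}(t)I_n+\int_0^t g\ast\eta(w)\,dw$ (this is the paper's \eqref{gStieltjes} from Proposition~\ref{gExistence}(ii), sharpened by the support statement of Proposition~\ref{gExistence}(iv) — note that causality of $g$ itself is proved there, not literally in Theorem~\ref{existence}), then verify the candidate $\xi$ solves the restarted MSDDE and conclude by a Gr\"onwall/uniqueness argument. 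That verification, which you leave as ``one checks'', does go through, but be aware it hinges on the matrix-valued commutation $g\ast\eta=\eta\ast g$ (true because $\mathcal{F}[g](y)=h(iy)^{-1}$ commutes with $\mathcal{F}[\eta](y)$; the paper invokes exactly this in the proof of Theorem~\ref{existence}), and the conditional-Fubini step uses that $g(du)$ is finite with first moment (Corollary~\ref{gMeasure}). What the two routes buy: the paper's transform argument avoids all pathwise convolution bookkeeping and yields the identity without the causality hypothesis, while your renewal-equation argument is more elementary and self-contained at the kernel level, makes the fundamental-solution structure explicit, and — a small plus — spells out the $\sigma$-algebra identification (one inclusion from \eqref{MSDDE}, the other from causality) that the paper's one-line proof leaves implicit; the price is the extra Fubini/Gr\"onwall detail and the restriction to the causal case, which is anyway assumed in the theorem.
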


\begin{remark}
In case $(Z_t)_{t\in \mathbb{R}}$ is a L\'{e}vy process, the prediction formula in Theorem~\ref{Prediction} simplifies, since $\hat{Z}_u = (u-s)\mathbb{E}[Z_1]$ and thus
\begin{align*}
\MoveEqLeft \EE [X_t \mid X_u, \, u\leq s] \\
&=  g(t-s) X_s + \int_s^t g(t-u) \eta \ast \big\{\mathds{1}_{(-\infty,s]}X\big\} (u) \, du + \int_s^tg(t-u)\, du\, \mathbb{E}[Z_1],
\end{align*}
using integration by parts. Obviously, the formula takes an even simpler form if $\mathbb{E}[Z_1]= 0$. If instead we are in a long memory setting and $(Z_t)_{t\in \mathbb{R}}$ is a fractional Brownian motion, we can rely on \cite{GripenbergNorros} to obtain $(\hat{Z}_u)_{s<u\leq t}$ and then use the formula given in Theorem~\ref{Prediction} to compute the prediction $\mathbb{E}[X_t\mid X_u,\, u\leq s]$. 
\end{remark}
In Section~\ref{CARMArelation} we use this prediction formula combined with the relation between MSDDEs and MCARMA processes to obtain a prediction formula for any invertible MCARMA process.

%Earlier study of SDDEs
%
%Typically, one will need to model jointly --> MSDDE
%
%Unsatisfactory that $q=p-1$ in univariate SDDE. As an application of the MSDDE framework and of the obtained result we show that, under a mild invertibility assumption, any integrable MCARMA($p,q$) and MFICARMA($p,\alpha,q$) (in particular, the univariate CARMA($p,q$) and FICARMA($p,\alpha,q$)) is a unique solution a particular MSDDE. The need for an invertibility assumption seems fair, since a consequence of the result is that it provides explicit inversion formulas for all such integrable CARMA type processes.

\section{Examples and further results}\label{sectionMSDDE}
%We will now introduce our general framework for multidimensional SDDEs. Let $n\in \mathbb{N}$ and $Z_t = (Z_t^1,\dots, Z_t^n)^T$, $t\in \mathbb{R}$, be an $n$-dimensional process which is measurable and integrable, and which has stationary increments. Suppose furthermore that $\eta = (\eta_{jk})$ is a finite $n\times n$ measure. Then we will say that an $n$-dimensional process $Y_t = (Y_t^1,\dots, Y_t^n)^T$, $t \in \mathbb{R}$, is a solution to the associated MSDDE if it is measurable, integrable, stationary and satisfies that
%In this section we will go more into depth in the analysis of MSDDEs of the form in \eqref{MSDDE} or, in compact form,
%\begin{align}\label{MSDDEcompact}
%dY_t = \eta \ast Y (t)\, dt + dZ_t
%\end{align}
%where $\eta \ast Y := (Y^T \ast \eta^T )^T$  for a finite $n\times n$ matrix valued measure $\eta$ and integrable stationary increment process $Z_t = (Z_t^1,\dots, Z_t^n)^T$, $t\in \mathbb{R}$.

%\begin{remark}\label{solutionStructure}
%According to Proposition~\ref{MArep} and Example~\ref{LevyPlusFractional} below, the solution to \eqref{MSDDEcompact} will very often take form as a $(Z_t)_{t\in \mathbb{R}}$-driven moving average, that is,
%\begin{align*}
%Y_t = \int_\mathbb{R}g(t-u)\, dZ_u
%\end{align*}
%for each $t \in \mathbb{R}$. This fact somewhat justifies the notation $g\ast Z$ introduced in (\ref{solutionForm}). We refer to the particular result for details.
%\end{remark}

In this section we will consider several examples of MSDDEs and give some additional results. We begin by defining what we mean by a regular integrator, since this makes it possible to have the compact form \eqref{noiseMA} of the solution to \eqref{MultiSDDEcompact} in most cases. Next, we show how one can nest higher order MSDDEs in the (first order) MSDDE framework. Finally, we show that invertible MCARMA processes (and some generalizations) form a particular subclass of solutions to higher order MSDDEs.  

\subsection{Regular integrators and moving average representations}\label{RegularIntegrator}

When considering the form of the solution in Theorem~\ref{existence} it is natural to ask if this can be seen as a moving average of the kernel $g$ with respect to the noise $(Z_t)_{t \in \RR}$, that is, if
\begin{align}\label{movingAverageEntry}
X^j_t = \bigg(\int_\mathbb{R}g (t-u)\, dZ_u\biggr)_j =  \sum_{k=1}^n \int_\mathbb{R}g_{jk}(t-u)\, dZ^k_u , \quad t \in \mathbb{R},
\end{align}
for $j=1,\dots, n$. The next result shows that the answer is positive if $(Z^k_t)_{t\in \mathbb{R}}$ is a "reasonable"\ integrator for a suitable class of deterministic integrands for each $k=1,\dots, n$.

\begin{proposition}\label{MArep}
Let $h$ be the function given in \eqref{DefOfh} and suppose that, for all $y \in \RR$, $\det (h(iy)) \neq 0$. Suppose further that $\eta$ has second moment and let $(X_t)_{t\in \mathbb{R}}$ be the solution to \eqref{MultiSDDEcompact} given by \eqref{solutionForm}. Finally assume that, for each $k=1, \dots, n$, there exists a linear map $I_k:L^1 \cap L^2 \to L^1(\mathbb{P})$ which has the following properties:
\begin{enumerate}[(i)]
\item For all $s<t$, $I_k (\mathds{1}_{(s,t]}) = Z^k_t - Z^k_s$.

\item If $\mu$ is a finite Borel measure on $\mathbb{R}$ having first moment then
\begin{align}\label{FubiniRelation}
I_k \biggr( \int_\mathbb{R}f_r(t-\cdot)\, \mu (dr) \biggr) = \int_\mathbb{R}I_k(f_r(t-\cdot))\, \mu (dr) 
\end{align}
almost surely for all $t\in \mathbb{R}$, where $f_r= \mathds{1}_{[0,\infty)}(\cdot-r) - \mathds{1}_{[0,\infty)}$ for $r \in \mathbb{R}$.
\end{enumerate}
Then it holds that
\begin{align}\label{MArelation}
X^j_t = \sum_{k=1}^n I_k (g_{jk}(t-\cdot)), \quad j=1,\dots, n,
\end{align}
almost surely for each $t\in \mathbb{R}$. In this case, $(Z_t)_{t \in \RR}$ will be called a regular integrator and we will write $\int \cdot \, dZ^k = I_k$.
\end{proposition}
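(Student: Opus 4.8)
The plan is to represent, for each fixed $t\in\mathbb{R}$ and each pair $(j,k)$, the function $g_{jk}(t-\cdot)$ as an integral of the elementary integrands $f_r(t-\cdot)$ against a finite signed measure with a first moment, and then to run this representation through properties (i) and (ii). The starting observation is that, viewed as a function of $s$, one has $f_r(t-s)=\mathds{1}_{(-\infty,t-r]}(s)-\mathds{1}_{(-\infty,t]}(s)$, which is $\pm$ the indicator of a bounded interval (identically zero when $r=0$) and hence lies in $L^1\cap L^2$; so, by linearity of $I_k$ and property (i), $I_k\big(f_r(t-\cdot)\big)=Z^k_{t-r}-Z^k_t$ for every $r\in\mathbb{R}$. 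Granting the representation $g_{jk}(t-\cdot)=\int_{\mathbb{R}}f_r(t-\cdot)\,\mu_{jk}(dr)$ and decomposing each $\mu_{jk}$ into its positive and negative parts (both finite non-negative measures with a first moment), property (ii) then yields
\begin{align*}
\sum_{k=1}^n I_k\big(g_{jk}(t-\cdot)\big)=\sum_{k=1}^n\int_{\mathbb{R}}\big(Z^k_{t-r}-Z^k_t\big)\,\mu_{jk}(dr),
\end{align*}
and it remains to match the right-hand side with $X^j_t$.

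The heart of the argument is the identification of the measures, and the natural guess is $\mu_{jk}(dr)=\delta_{jk}\,\delta_0(dr)+(g\ast\eta)_{jk}(r)\,dr$; equivalently, $g$ should be the fundamental solution in the sense that
\begin{align}\label{eq:fund}
g(s)=I_n\,\mathds{1}_{[0,\infty)}(s)+\int_{-\infty}^{s}(g\ast\eta)(r)\,dr\qquad\text{for a.e. }s\in\mathbb{R},
\end{align}
so that its distributional derivative is $\delta_0 I_n+(g\ast\eta)(s)\,ds$. I would prove \eqref{eq:fund} by comparing Fourier transforms. Writing $\tilde g$ for the right-hand side of \eqref{eq:fund}, it is right-continuous and of bounded variation with $d\tilde g=\delta_0 I_n+(g\ast\eta)(s)\,ds$, it vanishes at $\pm\infty$ since $\int_{\mathbb{R}}(g\ast\eta)(r)\,dr=\mathcal{F}[g\ast\eta](0)=h(0)^{-1}\mathcal{L}[\eta](0)=-I_n$ (using $h(0)=-\mathcal{L}[\eta](0)$), and it lies in $L^1$; integration by parts, the convolution theorem, and the elementary identity $h(iy)^{-1}\mathcal{L}[\eta](iy)=-iy\,h(iy)^{-1}-I_n$ then give
\begin{align*}
-iy\,\mathcal{F}[\tilde g](y)=\int_{\mathbb{R}}e^{iys}\,d\tilde g(s)=I_n+\mathcal{F}[g](y)\mathcal{F}[\eta](y)=-iy\,h(iy)^{-1},
\end{align*}
so $\mathcal{F}[\tilde g](y)=h(iy)^{-1}=\mathcal{F}[g](y)$ for $y\neq 0$, hence for all $y$ by continuity, and $\tilde g=g$ a.e.\ by injectivity of the Fourier transform. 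From \eqref{eq:fund} one then computes $\mu_{jk}((-\infty,s])=\delta_{jk}\mathds{1}_{[0,\infty)}(s)+\int_{-\infty}^{s}(g\ast\eta)_{jk}(r)\,dr$ and $\mu_{jk}(\mathbb{R})=\delta_{jk}+\int_{\mathbb{R}}(g\ast\eta)_{jk}(r)\,dr=0$, whence $\int_{\mathbb{R}}f_r(s)\,\mu_{jk}(dr)=\mu_{jk}((-\infty,s])-\mathds{1}_{[0,\infty)}(s)\,\mu_{jk}(\mathbb{R})=g_{jk}(s)$ for a.e.\ $s$, which is the required representation.

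The conclusion is then a short substitution. Since $\mu_{jk}(\mathbb{R})=0$, the $Z^k_t$-terms drop out of the display above, and substituting $\mu_{jk}(dr)=\delta_{jk}\delta_0(dr)+(g\ast\eta)_{jk}(r)\,dr$ gives
\begin{align*}
\sum_{k=1}^n I_k\big(g_{jk}(t-\cdot)\big)=\sum_{k=1}^n\int_{\mathbb{R}}Z^k_{t-r}\,\mu_{jk}(dr)=Z^j_t+\sum_{k=1}^n\int_{\mathbb{R}}(g\ast\eta)_{jk}(r)\,Z^k_{t-r}\,dr=X^j_t,
\end{align*}
where the last equality is the defining formula \eqref{solutionForm} for $(X_t)_{t\in\mathbb{R}}$ after the substitution $u=t-r$. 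This is \eqref{MArelation}.

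I expect the main obstacle to be the rigorous verification of \eqref{eq:fund} together with the integrability bookkeeping it relies on. Concretely, one needs $g\in L^1$ — which already ensures $g_{jk}(t-\cdot)$ is in the domain of $I_k$ and, via Young's inequality, that $g\ast\eta\in L^1$ — and one needs $g\ast\eta$ to have a first moment, so that each $\mu_{jk}$ is admissible in property (ii) and so that $\tilde g\in L^1$ in the Fourier argument. Both follow from the second-moment hypothesis on $\eta$: it makes $y\mapsto h(iy)$ twice continuously differentiable with bounded first and second derivatives, giving the decay $\mathcal{F}[g](y)=h(iy)^{-1}=O(|y|^{-1})$, $(\mathcal{F}[g])'(y)=O(|y|^{-2})$ and $(\mathcal{F}[g])''(y)=O(|y|^{-2})$ as $|y|\to\infty$, which upon Fourier inversion yield $g\in L^1$ and $u\mapsto u\,g(u)\in L^1$; the first moment of $g\ast\eta$ then follows by writing $(g\ast\eta)(r)=\int_{\mathbb{R}}g(r-v)\,\eta(dv)$ and bounding. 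The remaining steps — evaluating $I_k$ on indicators, the measure-theoretic identities, and the final substitution — are routine.
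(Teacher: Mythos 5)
Your proposal is correct and takes essentially the same route as the paper's own proof: property (i) gives $I_k(f_r(t-\cdot))=Z^k_{t-r}-Z^k_t$, the identity $g(s)=I_n\mathds{1}_{[0,\infty)}(s)+\int_{-\infty}^s g\ast\eta(u)\,du$ together with $\int_\mathbb{R} g\ast\eta(u)\,du=-I_n$ — which is exactly \eqref{gStieltjes} and \eqref{minusIdentity} of Proposition~\ref{gExistence}, and which you re-derive in-line by the same kind of Fourier/weighted-$L^2$ bookkeeping the paper uses there and in Corollary~\ref{gMeasure} — lets you write $g_{jk}(t-\cdot)$ as a mixture of the $f_r(t-\cdot)$ against the zero-mass measure $\delta_{jk}\delta_0(dr)+(g\ast\eta)_{jk}(r)\,dr$, and hypothesis (ii) plus \eqref{solutionForm} then yield \eqref{MArelation}. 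The only point the paper makes explicit that you dismiss as routine is the well-definedness of both sides of \eqref{FubiniRelation} for this (signed) measure, which the paper gets from $\mathbb{E}[\vert Z^k_u\vert]\leq \alpha+\beta\vert u\vert$ and the first moment of $(g\ast\eta)_{jk}(r)\,dr$; this is a minor omission, not a gap in the argument.
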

The typical example of a regular integrator is a multi-dimensional L\'{e}vy process:
\begin{example}\label{LevyIntegrator}
Suppose that $(Z_t)_{ t\in \RR}$ is an $n$-dimensional integrable L\'{e}vy process. Then, in particular, each $(Z^j_t)_{t\in \mathbb{R}}$ is an integrable (one-dimensional) L\'{e}vy process, and in \cite[Lemma~5.3]{contARMAframework} it is shown that the integral $\int_\mathbb{R} f(u)\, dZ^j_u$ is well-defined in the sense of \cite{Rosinski_spec} and belongs to $L^1(\mathbb{P})$ if $f \in L^1 \cap L^2$. Moreover, the stochastic Fubini result given in \cite[Theorem~3.1]{QOU} implies in particular that condition (ii) of Proposition~\ref{MArep} is satisfied, which shows that $(Z_t)_{t\in \mathbb{R}}$ is a regular integrator and that \eqref{movingAverageEntry} holds. 
\end{example}
%\end{example}
We will now show that a class of multi-dimensional fractional L\'{e}vy processes can serve as regular integrators as well (cf. Example~\ref{fracLevyIntegrator} below). Fractional noise processes are often used as a tool to incorporate (some variant of) long memory in the corresponding solution process. As will appear, the integration theory for fractional L\'{e}vy processes we will use below relies on the ideas of \cite{Tina}, but is extended to allow for symmetric stable L\'{e}vy processes as well. For more on fractional stable L\'{e}vy processes, the so-called linear fractional stable motions, we refer to \cite[p. 343]{Stable}. First, however, we will need the following observation:
\begin{proposition}\label{RiemannReg}
Let $f:\mathbb{R}\to \mathbb{R}$ be a function in $L^1\cap L^{\alpha}$ for some $\alpha \in (1,2]$. Then the right-sided Riemann-Liouville fractional integral
\begin{align}\label{RiemannLiouvilleFrac}
I^{\beta}_- f: t \mapsto \frac{1}{\Gamma (\beta)}\int_t^\infty f(u)(u-t)^{\beta -1}\, du
\end{align}
is well-defined and belongs to $L^\alpha$ for any $\beta \in (0,1-1/\alpha)$.
\end{proposition}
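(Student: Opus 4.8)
The plan is to recognise $I^\beta_-$ as convolution against a single fixed kernel and then to split that kernel according to its two distinct sources of bad behaviour --- a power singularity at the origin and a slow (non-integrable) decay at $-\infty$ --- estimating each piece by Young's convolution inequality.

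First I would substitute $v=u-t$ in \eqref{RiemannLiouvilleFrac} to write, formally,
\begin{align*}
I^\beta_- f(t) = \frac{1}{\Gamma(\beta)}\int_0^\infty f(t+v)\, v^{\beta-1}\,dv = \int_\mathbb{R} f(t-y)\,\phi_\beta(y)\,dy,
\end{align*}
where $\phi_\beta(y):=\frac{1}{\Gamma(\beta)}(-y)^{\beta-1}\mathds{1}_{(-\infty,0)}(y)$, so that $I^\beta_- f$ is the ordinary convolution of $f$ with the locally integrable kernel $\phi_\beta$. Next I would decompose $\phi_\beta=\psi+\chi$, where $\psi:=\phi_\beta\,\mathds{1}_{(-1,0)}$ carries the singularity and $\chi:=\phi_\beta\,\mathds{1}_{(-\infty,-1]}$ carries the tail. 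Because $\beta>0$ one has $\int_0^1 v^{\beta-1}\,dv<\infty$, hence $\psi\in L^1$; because $\beta<1-1/\alpha$ one has $(\beta-1)\alpha<-1$, hence $\int_1^\infty v^{(\beta-1)\alpha}\,dv<\infty$ and $\chi\in L^\alpha$. Neither exponent condition is superfluous, and together they explain the hypothesis $f\in L^1\cap L^\alpha$.

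Now Young's inequality $\|f_1*f_2\|_\alpha\le\|f_1\|_p\|f_2\|_r$, valid when $\tfrac1\alpha=\tfrac1p+\tfrac1r-1$, applied with $(p,r)=(\alpha,1)$ to $f$ and $\psi$, and with $(p,r)=(1,\alpha)$ to $f$ and $\chi$, shows that both $f*\psi$ and $f*\chi$ are defined almost everywhere and lie in $L^\alpha$. Running the same two estimates with $|f|$ in place of $f$ shows $\int_\mathbb{R}|f(t-y)|\,\phi_\beta(y)\,dy<\infty$ for a.e.\ $t$, i.e.\ the integral in \eqref{RiemannLiouvilleFrac} converges absolutely for a.e.\ $t$; and then $I^\beta_- f=f*\psi+f*\chi\in L^\alpha$, which is the claim.

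I do not expect a genuine obstacle here. The only point requiring a little care is that $\phi_\beta$ lies in no single $L^p(\mathbb{R})$, so one must split it as $\psi+\chi$ and pair each piece with the correct norm of $f$ in Young's inequality; once that decomposition is in place the rest is the routine bookkeeping of when $v^{\beta-1}$ is integrable near $0$ and when $v^{(\beta-1)\alpha}$ is integrable near $\infty$.
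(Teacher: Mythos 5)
Your argument is correct and is essentially the paper's own proof: the paper splits the integral at $u=1$ in exactly the same way and pairs the singular part with $\|f\|_{\alpha}$ and the tail with $\|f\|_{1}$, the only difference being that it carries out the two estimates by hand via Jensen/H\"older rather than quoting Young's convolution inequality, which is what your two applications of Young amount to. No gap to report.
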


\begin{example}\label{fracLevyIntegrator}
Let $\alpha = (\alpha_1,\dots, \alpha_n)$ with $\alpha_j \in (1,2]$ and $f=(f_{jk}):\mathbb{R}\to \mathbb{R}^{n\times n}$ be a function such that $f_{jk} \in L^1 \cap L^{\alpha_k}$ for $j,k=1,\dots, n$. Consider an $n$-dimensional L\'{e}vy process $(L_t)_{t\in \mathbb{R}}$ where its $j$-th coordinate is symmetric $\alpha_j$-stable if $\alpha_j \in (1,2)$ and mean zero and square integrable if $\alpha_j=2$. Then, for a given vector $\beta = (\beta_1,\dots, \beta_n)$ with $\beta_j \in (0,1-1/\alpha_j)$ for $j=1,\dots, n$ the corresponding fractional L\'{e}vy process $(Z_t)_{t\in \mathbb{R}}$ with parameter $\beta$ is defined as
\begin{align*}
Z^j_t &= \int_\mathbb{R}\big(I^{\beta_j}_- [\mathds{1}_{(-\infty,t]}-\mathds{1}_{(-\infty,0]}]\big)(u)\, dL^j_u \\
&= \frac{1}{\Gamma (1+ \beta_j)}\int_\mathbb{R}\big[(t-u)_+^{\beta_j} - (-u)^{\beta_j}_+\big]\, dL^j_u
\end{align*}
for $t\in \mathbb{R}$ and $j=1,\dots, n$, and where $x_+ = \max \{x,0\}$. In light of Proposition~\ref{RiemannReg}, this definition makes it natural to define the integral of a function $f:\mathbb{R}\to \mathbb{R}$ in $L^1 \cap L^{\alpha_j}$ (particularly in $L^1 \cap L^2$) with respect to $(Z^j_t)_{t\in \mathbb{R}}$ as
\begin{align*}
\int_\mathbb{R} f(u)\, dZ^j_u = \int_\mathbb{R} \big(I^{\beta_j}_-f\big)(u)\, dL^j_u
\end{align*}
for $j=1,\dots, n$. Note that the integral belongs to $L^2(\mathbb{P})$ for $\alpha_j=2$ and to $L^\gamma(\mathbb{P})$ for any $\gamma <\alpha_j$ if $\alpha_j \in (1,2)$. Using Proposition~\ref{RiemannReg} and the stochastic Fubini result given in \cite[Theorem~3.1]{QOU} for $(L^j_t)_{t\in \mathbb{R}}$ it is straightforward to verify that assumption (ii) of Proposition~\ref{MArep} is satisfied as well, and thus $(Z_t)_{t \in \RR}$ is a regular integrator and the solution $(X_t)_{t\in \mathbb{R}}$ to \eqref{MultiSDDEcompact} takes the moving average form \eqref{movingAverageEntry}.
\end{example}

At this point it should be clear that the conditions for being a regular integrator are mild, hence they will, besides the examples mentioned above, also be satisfied for a wide class of semimartingales with stationary increments.

\subsection{Higher order (multivariate) SDDEs}\label{hOrderSection}

An advantage of introducing the multivariate setting \eqref{MultiSDDEcompact} is that we can nest higher order MSDDEs in this framework. Effectively, as usual and as will be demonstrated below, it is done by increasing the dimension accordingly. 

Let $\varpi_0,\varpi_1,\dots, \varpi_{m-1}$ be (entrywise) finite $n \times n$ measures concentrated on $[0,\infty)$ which all admit second moment, and let $(Z_t)_{t\in \mathbb{R}}$ be an $n$-dimensional integrable stochastic process with stationary increments. For convenience we will assume that $(Z_t)_{t\in \mathbb{R}}$ is a regular integrator in the sense of Proposition~\ref{MArep}. We will say that an $n$-dimensional stationary, integrable and measurable process $(X_t)_{t\in \mathbb{R}}$ satisfies the corresponding $m$-th order MSDDE if it is $m-1$ times differentiable and
\begin{align}\label{hOrderSDDE}
dX^{(m-1)}_t = \sum_{j=0}^{m-1} \varpi_j\ast X^{(j)}  (t)\, dt + dZ_t
\end{align}
where $(X^{(j)}_t)_{t\in \mathbb{R}}$ denotes the entrywise $j$-th derivative of $(X_t)_{t\in\mathbb{R}}$ with respect to $t$. By \eqref{hOrderSDDE} we mean that
\begin{align*}
\big(X^{(m-1)}_t\big)^k-\big(X^{(m-1)}_s\big)^k = \sum_{j=0}^{m-1}\sum_{l=1}^n \int_s^t \int_{[0,\infty)} \big(X^{(j)}_{u-v}\big)^l\, (\varpi_j )_{kl} (dv)\, du +Z^k_t - Z^k_s
\end{align*}
for $k=1,\dots, n$ and each $s<t$ almost surely.
Equation \eqref{hOrderSDDE} corresponds to the $mn$-dimensional MSDDE in \eqref{MultiSDDEcompact} with noise $(0,\dots, 0,Z_t^T)^T \in \mathbb{R}^{m n}$ and
\begin{align}\label{etaHigherOrder}
\eta = \begin{bmatrix*}[c] 0 & I_n\delta_0 & 0 & \cdots & 0 \\
0 & 0 & I_n\delta_0 & \cdots & 0 \\
\vdots & \vdots & \vdots & \ddots & \vdots \\
0 & 0 & 0 &  \cdots & I_n\delta_0 \\
\varpi_0 & \varpi_1 & \varpi_2 & \cdots & \varpi_{m-1}
\end{bmatrix*}.
\end{align}
(If $n=1$ then $\eta = \varpi_0$.) %Note that when $m =2$, $\eta$ should be understood as
%\begin{align*}
%\eta = \begin{bmatrix*}[c] 0 & I_n\delta_0 \\
%\varpi_0 & \varpi_1
%\end{bmatrix*}
%\end{align*}
With $\eta$ given by \eqref{etaHigherOrder} it follows that
\begin{align*}
D(\eta) = \bigcap_{j=0}^{m-1} D(\varpi_j)
\end{align*}
and
\begin{align*}
h(z) = -\begin{bmatrix}
I_n z & I_n & 0 & \cdots & 0 \\
0 & I_n z & I_n & \cdots & 0 \\
\vdots & \vdots & \ddots & \ddots & \vdots \\
0 & 0 & \cdots & I_n z &  I_n\\
\mathcal{L}[\varpi_0](z) & \mathcal{L}[\varpi_1](z) & \cdots & \mathcal{L}[\varpi_{m-2}](z) & I_n z+\mathcal{L}[\varpi_{m-1}](z)
\end{bmatrix}
\end{align*}
for $z \in D(\eta)$. In general, we know from Theorem~\ref{existence} that a solution to \eqref{hOrderSDDE} exists if $\det (h(iy))\neq 0$ for all $y \in \mathbb{R}$, and in this case the unique solution is given by
\begin{align}\label{solutionTohOrder}
X_t = \int_\mathbb{R}g_{1m}(t-u)\, dZ_u,\quad t \in \mathbb{R},
\end{align}
where $\mathcal{F}[g_{1m}]$ is characterized as entrance $(1,m)$ in the $n \times n$ block representation of $h(i\cdot)^{-1}$. In other words, if $e_j$ denotes the $j$-th canonical basisvector of $\mathbb{R}^m$ and $\otimes$ the Kronecker product,
\begin{align*}
\mathcal{F}[g_{1m}](y) = (e_1\otimes I_n)^T h(iy)^{-1}(e_m\otimes I_n)
\end{align*}
for $y \in \mathbb{R}$. However, due to the particular structure of $\eta$ in \eqref{etaHigherOrder} we can simplify these expressions:
\begin{theorem}\label{simpleFourierKernel} Let the setup be as above. Then it holds that
\begin{align}\label{higherOrderDetCond}
\det (h(z)) = \det \biggr(I_n(-z)^m - \sum_{j=0}^{m-1} \mathcal{L}[\varpi_j](z) (-z)^j \biggr)
\end{align}
for all $z\in D(\eta)$, and if $\det (h(iy))\neq 0$ for all $y \in \mathbb{R}$, there exists a unique solution to \eqref{hOrderSDDE} and it is given as \eqref{solutionTohOrder} where $g:\mathbb{R}\to \mathbb{R}^{n \times n}$ is characterized by
\begin{align}\label{hOrderFourierT}
\mathcal{F}[g_{1m}](y) = \biggr( I_n(-iy)^m - \sum_{j=0}^{m-1} \mathcal{F}[\varpi_j](y) (-iy)^j \biggr)^{-1}
\end{align}
for $y \in \mathbb{R}$. The solution is causal if $\det (h(z))\neq 0$ whenever $\text{Re}(z)\leq 0$.
\end{theorem}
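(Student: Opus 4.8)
The plan is to derive everything from Theorem~\ref{existence} applied to the $mn$-dimensional MSDDE \eqref{MultiSDDEcompact} with $\eta$ given by \eqref{etaHigherOrder}, so the only genuine work is computing the determinant \eqref{higherOrderDetCond} and extracting the $(1,m)$-block of $h(z)^{-1}$ in closed form. First I would establish \eqref{higherOrderDetCond} by a block-matrix determinant computation. Writing $h(z) = -M(z)$ where $M(z)$ is the displayed block companion-type matrix, I would perform block row/column operations: the super-diagonal blocks $I_n$ let one successively eliminate the blocks $I_n z$ on the diagonal (from the top), folding the effect of each block row into the last block row. Carrying this elimination through the first $m-1$ block rows converts $M(z)$ into a block upper-triangular matrix whose first $m-1$ diagonal blocks are $I_n$ (up to sign bookkeeping from the $-z$ factors) and whose bottom-right block becomes $I_n z^m + \sum_{j=0}^{m-1}\mathcal{L}[\varpi_j](z) z^{?}$ with the appropriate signs; since $\det(h(z)) = (-1)^{mn}\det(M(z))$, collecting signs yields exactly $\det\!\bigl(I_n(-z)^m - \sum_{j=0}^{m-1}\mathcal{L}[\varpi_j](z)(-z)^j\bigr)$. (Equivalently, one can argue via the standard fact that the characteristic-type determinant of a block companion matrix equals the determinant of the associated matrix polynomial evaluated appropriately; I would phrase it as a clean block-Schur-complement argument to keep the sign tracking transparent.)

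Next, assuming $\det(h(iy))\neq 0$ for all $y\in\mathbb{R}$, Theorem~\ref{existence} immediately gives a unique stationary integrable solution $(\mathbf{X}_t)_{t\in\mathbb{R}}$ to the $mn$-dimensional system in the form $\mathbf{X}_t = G\ast Z(t)$ with $\mathcal{F}[G](y) = h(iy)^{-1}$; by the nesting construction in Section~\ref{hOrderSection} its first $n$ coordinates give the desired $(X_t)$ and the remaining blocks are its derivatives $X^{(1)},\dots,X^{(m-1)}$, so $X_t = (e_1\otimes I_n)^T \mathbf{X}_t$, i.e. $X_t$ is driven by the kernel $g_{1m}$ with $\mathcal{F}[g_{1m}](y) = (e_1\otimes I_n)^T h(iy)^{-1}(e_m\otimes I_n)$. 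Since the noise is a regular integrator, Proposition~\ref{MArep} upgrades the convolution $g_{1m}\ast Z$ to the moving-average integral \eqref{solutionTohOrder}. It then remains to identify this $(1,m)$-block. I would solve $h(iy)^{-1}(e_m\otimes I_n) = v$ blockwise: writing $v = (v_1,\dots,v_m)^T$ with $v_k\in\mathbb{C}^{n\times n}$, the first $m-1$ block equations of $h(iy)v = e_m\otimes I_n$ read $-(iy)v_k - v_{k+1} = 0$, hence $v_{k+1} = -(iy)v_k$ and so $v_k = (-iy)^{k-1}v_1$; substituting into the last block equation $-\mathcal{L}[\varpi_0](iy)v_1 - \cdots - \mathcal{L}[\varpi_{m-2}](iy)v_{m-1} - \bigl((iy)I_n+\mathcal{L}[\varpi_{m-1}](iy)\bigr)v_m = I_n$ and collecting terms yields $\bigl(I_n(-iy)^m - \sum_{j=0}^{m-1}\mathcal{F}[\varpi_j](y)(-iy)^j\bigr)v_1 = I_n$, i.e. $v_1 = \mathcal{F}[g_{1m}](y)$ as claimed in \eqref{hOrderFourierT}. (One checks the bottom equation's sign against $h(z) = -M(z)$; the $-(iy)v_m$ term combines with $v_m = (-iy)^{m-1}v_1$ to give the leading $(-iy)^m$ with the correct sign.) Invertibility of the matrix polynomial at every $iy$ is exactly \eqref{higherOrderDetCond} being nonzero, so $v_1$ is well-defined, and $\mathcal{F}[g_{1m}]\in L^2$ because $G\in L^2$ by Theorem~\ref{existence}; uniqueness of $(X_t)$ follows from uniqueness of $(\mathbf{X}_t)$ together with the fact that any solution of \eqref{hOrderSDDE} lifts to a solution of the nested system.

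Finally, for causality I would invoke the last clause of Theorem~\ref{existence}: if $\det(h(z))\neq 0$ for all $z$ with $\Real(z)\leq 0$ — which by \eqref{higherOrderDetCond} is the stated condition on the matrix polynomial — then $(\mathbf{X}_t)$, and a fortiori its first block $(X_t)$, is adapted to the increment filtration of $(Z_t)$, giving causality.

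I expect the main obstacle to be the bookkeeping of signs and powers of $z$ in the block-determinant reduction \eqref{higherOrderDetCond} and, correspondingly, in verifying that the blockwise back-substitution produces precisely the matrix polynomial $I_n(-iy)^m - \sum_j \mathcal{F}[\varpi_j](y)(-iy)^j$ rather than some sign variant; everything else is a direct appeal to Theorem~\ref{existence} and Proposition~\ref{MArep}. It is worth doing the reduction once carefully for $m=2$ to fix the sign conventions before writing the general induction.
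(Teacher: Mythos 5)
Your proposal is correct and follows essentially the same route as the paper: establish \eqref{higherOrderDetCond} by block companion-matrix linear algebra, invoke Theorem~\ref{existence} (plus the regular-integrator assumption and Proposition~\ref{MArep}) for existence, uniqueness, the moving-average form and causality, and identify the $(1,m)$-block of $h(iy)^{-1}$ with the inverse matrix polynomial. Your blockwise back-substitution solving $h(iy)v = e_m\otimes I_n$ is just a direct way of carrying out the paper's inductive block-inversion computation, and your elimination using the superdiagonal identity blocks has the minor advantage of not needing the separate $z=0$ case that the paper's Schur-complement argument (with $A=I_nz$) handles via the Leibniz formula.
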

Observe that, as should be the case, we are back to the first order MSDDE when $m=1$ and \eqref{higherOrderDetCond}-\eqref{hOrderFourierT} agree with Theorem~\ref{existence}. As we will see in Section~\ref{CARMArelation} below, one motivation for introducing higher order MSDDEs of the form \eqref{hOrderSDDE} and to study the structure of the associated solutions, is their relation to MCARMA processes. However, we start with the multivariate CAR($p$) process, where no delay term will be present, as an example:
\begin{example}\label{CARprocess}
Let $P(z) = I_nz^p + A_1z^{p-1}+ \cdots + A_p$, $z \in\mathbb{C}$, for suitable $A_1,\dots, A_p\in \mathbb{R}^{n \times n}$. The associated CAR($p$) process $(X_t)_{t\in \mathbb{R}}$ with noise $(Z_t)_{t\in \mathbb{R}}$ can be thought of as formally satisfying $P(D)X_t = DZ_t$, $t\in \mathbb{R}$, where $D$ denotes differentiation with respect to $t$. Integrating both sides and rearranging terms gives
\begin{align}\label{CARrelation}
dX^{(p-1)}_t = - \sum_{j=0}^{p-1}A_{p-j}X^{(j)}_t\, dt + dZ_t,\quad t \in \mathbb{R},
\end{align}
which is of the form \eqref{hOrderSDDE} with $m=p$ and $\varpi_j = -A_{p-j}\delta_0$ for $j=0,1,\dots, p-1$. Proposition~\ref{simpleFourierKernel} shows that a unique solution exists if
\begin{align*}
\det\biggr( I_n(iy)^p + \sum_{j=0}^{p-1} A_{p-j} (iy)^j \biggr) = \det (P(iy))\neq 0
\end{align*}
for all $y \in \mathbb{R}$, and in this case $\mathcal{F}[g_{1m}](y) = P(-iy)^{-1}$ for $y\in \mathbb{R}$. This agrees with the rigorous definition of the CAR($p$) process, see e.g. \cite{MarquardtStelzer}. In case $p=1$, \eqref{CARrelation} collapses to the multivariate Ornstein-Uhlenbeck equation
\begin{align*}
dX_t = - A_1 X_t\, dt + dZ_t, \quad t \in \mathbb{R},
\end{align*}
and if the eigenvalues of $A_1$ are all positive, it is easy to check that $g_{1m}(t) = e^{-A_1t}\mathds{1}_{[0,\infty)}(t)$ so that the unique solution $(X_t)_{t\in \mathbb{R}}$ is causal and takes the well-known form
\begin{align}\label{OUsolution}
X_t = \int_{-\infty}^te^{-A_1(t-u)}\, dZ_u
\end{align}
for $t\in \mathbb{R}$. Lévy-driven multivariate Ornstein-Uhlenbeck processes have been studied extensively in the literature, and the moving average structure \eqref{OUsolution} of the solution is well-known when $(Z_t)_{t \in \RR}$ is a Lévy process. We refer to \cite{BarndorffJensenSoerensen,SatoWatanabeYamazato,SatoYamazato} for further details. The one-dimensional case where $(Z_t)_{t \in \RR}$ is allowed to be a general stationary increment process has been studied in \cite{QOU}.
\end{example}

\subsection{Relations to MCARMA processes}\label{CARMArelation}
Let $p \in \mathbb{N}$ and define the polynomials $P,Q:\mathbb{C}\to \mathbb{C}^{n\times n}$ by
\begin{align}\label{CARMApolyn}
\begin{aligned}
P(z) &= I_nz^p + A_1 z^{p-1}+ \cdots + A_p \quad \text{and}\\
Q(z) &= B_0 + B_1z + \cdots + B_{p-1}z^{p-1}
\end{aligned}
\end{align} 
for $z \in \mathbb{C}$ and suitable $A_1,\dots, A_p,B_0,\dots,B_{p-1} \in \RR^{n \times n}$. We will also fix $q \in \mathbb{N}_0$, $q<p$, and set $B_q = I_n$ and $B_j = 0$ for all $q<j<p$. It will always be assumed that $\det (P(iy)) \neq 0$ for all $y \in \mathbb{R}$. Under this assumption there exists a function $\tilde{g}:\mathbb{R}\to \mathbb{R}^{n \times n}$ which is in $L^1\cap L^2$ and 
\begin{align}\label{definingMCARMAkernel}
\mathcal{F}[\tilde{g}](y)= P(-iy)^{-1}Q(-iy)
\end{align}
for every $y \in \mathbb{R}$. Consequently, for any regular integrator $(Z_t)_{t\in \mathbb{R}}$ in the sense of Proposition~\ref{MArep}, the $n$-dimensional stationary and integrable process $(X_t)_{t \in \mathbb{R}}$ given by
\begin{align}\label{ZdrivenCARMA}
X_t = \int_\mathbb{R} \tilde{g}(t-u)\, dZ_u, \quad t \in \mathbb{R},
\end{align}
is well-defined. If it is additionally assumed that $\det (P(z)) \neq 0$ for $z \in \CC$ with $\Real (z) \geq 0$ then it is argued in \cite{MarquardtStelzer} that 
\begin{align}\label{gMCARMA}
\tilde{g}(t) = \mathds{1}_{[0,\infty)}(t)(e_1^p\otimes I_n)^T e^{At}E
\end{align} 
where
\begin{align*}
A = \begin{bmatrix}
0 & I_n & 0 & \cdots & 0 \\
0 & 0 & I_n &\cdots & 0\\
\vdots & \vdots & \ddots & \ddots & \vdots \\
0 & 0 & \cdots & 0 & I_n \\
-A_p & -A_{p-1} & \cdots & -A_{2} & -A_{1}
\end{bmatrix} \quad \text{and}\quad E= \begin{bmatrix}
E_1 \\ \vdots \\ E_p
\end{bmatrix},
\end{align*}
with $E(z) = E_1z^{p-1} + \cdots + E_p$ chosen such that
\begin{align*}
z \mapsto P(z) E(z) -Q(z)z^p
\end{align*}
is at most of degree $p-1$. (Above, and henceforth, we use the notation $e^k_j$ for the $j$-th canonical basis vector of $\RR^k$.) We will refer to the process $(X_t)_{t\in \mathbb{R}}$ as a $(Z_t)_{t\in \mathbb{R}}$-driven MCARMA($p,q$) process. For instance, when $(Z_t)_{t \in \RR}$ is an $n$-dimensional L\'{e}vy process, $(X_t)_{t\in \mathbb{R}}$ is a (L\'{e}vy-driven) MCARMA($p,q$) process as introduced in \cite{MarquardtStelzer}. If $(L_t)_{t\in \mathbb{R}}$ is an $n$-dimensional square integrable L\'{e}vy process with mean zero, and
\begin{align*}
Z^j_t = \frac{1}{\Gamma (1+\beta_j)}\int_\mathbb{R} \big[(t-u)_+^{\beta_j}-(-u)_+^{\beta_j}\big]\, dL^j_u, \quad t\in \mathbb{R},
\end{align*}
for $\beta_j \in (0,1/2)$ and $j=1,\dots, n$, then $(X_t)_{t\in \mathbb{R}}$ is an MFICARMA($p,\beta,q$) process, $\beta = (\beta_1,\dots, \beta_n)$, as studied in \cite{marquardtMFICARMA}. For the univariate case ($n=1$), the processes above correspond to the CARMA($p,q$) and FICARMA($p,\beta_1,q$) process, respectively. The class of CARMA processes has been studied extensively, and we refer to the references in the introduction for details.

\begin{remark} Observe that, generally, L\'{e}vy-driven MCARMA (hence CARMA) processes are defined even when $(Z_t)_{t\in \mathbb{R}}$ has no more than log moments. However, it relies heavily on the fact that $\tilde{g}$ and $(Z_t)_{t\in \mathbb{R}}$ are well-behaved enough to ensure that the process in \eqref{ZdrivenCARMA} remains well-defined. At this point, a setup where the noise does not admit a first moment has not been integrated in a framework as general as that of \eqref{MultiSDDEcompact}.
\end{remark}

In the following our aim is to show that, under a suitable invertibility assumption, the $(Z_t)_{t\in \mathbb{R}}$-driven MCARMA($p,q$) process given in \eqref{ZdrivenCARMA} is the unique solution to a certain (possibly higher order) MSDDE of the form \eqref{hOrderSDDE}. Before formulating the main result of this section we introduce some notation. To $P$ and $Q$ defined in \eqref{CARMApolyn} we will associate the unique polynomial $R(z) = I_n z^{p-q}+C_{p-q-1}z^{p-q-1} + \cdots +C_0$, $z \in \mathbb{C}$ and $C_0,C_1,\dots, C_{p-q-1}\in \mathbb{R}^{n\times n}$, having the property that
\begin{align}\label{residuePolyn}
z \mapsto Q(z)R(z) - P(z)
\end{align}
is a polynomial of at most order $q-1$ (see the introduction for an intuition about why this property is desirable). 

\begin{theorem}\label{MCARMAasMSDDE} Let $P$ and $Q$ be given as in \eqref{CARMApolyn}, and let $(X_t)_{t\in \mathbb{R}}$ be the associated $(Z_t)_{t\in \mathbb{R}}$-driven MCARMA($p,q$) process. Suppose that $\det (Q(z))\neq 0$ for all $z\in \CC$ with $\Real (z)\geq0$. Then $(X_t)_{t\in \mathbb{R}}$ is the unique solution to \eqref{hOrderSDDE} with 
\begin{align*}
m=p-q, \quad\varpi_0 (du) = -C_0\delta_0(du) + f(u)\, du , \quad \text{and} \quad \varpi_j= -C_j\delta_0, 
\end{align*}
for $1 \leq j \leq m-1$ or, written out,
\begin{align}\label{specificRelation}
dX^{(m-1)}_t = -\sum_{j=0}^{m-1} C_j X^{(j)}_t\, dt + \biggr(\int_0^\infty X_{t-u}^Tf(u)^T\, du\biggr)^T\, dt + dZ_t,
\end{align}
where $C_0,\dots, C_{m-1}\in \mathbb{R}^{n \times n}$ are defined as in \eqref{residuePolyn} above, $(X^{(j)}_t)_{t\in \mathbb{R}}$ is the $j$-th derivative of $(X_t)_{t\in \mathbb{R}}$, and where $f : \RR \to \RR^{n\times n}$ is characterized by 
\begin{align}\label{fFourier}
\FF[f](y) = R(-iy)-Q(-iy)^{-1}P(-iy).
\end{align}
\end{theorem}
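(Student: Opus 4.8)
The plan is to reduce Theorem~\ref{MCARMAasMSDDE} to an application of Theorem~\ref{simpleFourierKernel} by showing that the MCARMA kernel $\tilde g$ in \eqref{definingMCARMAkernel} coincides with the kernel $g_{1m}$ attached, via \eqref{hOrderFourierT}, to the higher order MSDDE \eqref{hOrderSDDE} with the stated choice of delay measures $\varpi_0,\dots,\varpi_{m-1}$ (with $m=p-q$). Since all the relevant processes are moving averages of $(Z_t)_{t\in\RR}$ against $L^1\cap L^2$ kernels and $(Z_t)_{t\in\RR}$ is a regular integrator, equality of Fourier transforms of the kernels gives equality of the processes, and Theorem~\ref{simpleFourierKernel} delivers both existence and uniqueness of the solution to \eqref{hOrderSDDE}.

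First I would justify that $f$ in \eqref{fFourier} is well-defined as an $L^1\cap L^2$ (indeed $L^2$) function: the right-hand side of \eqref{fFourier} equals $-Q(-iy)^{-1}S(-iy)$ where $S=QR-P$ has degree at most $q-1$ by \eqref{residuePolyn}, and since $\det Q(z)\neq 0$ for $\Real(z)\ge 0$, the rational matrix function $y\mapsto Q(-iy)^{-1}S(-iy)$ is bounded and decays like $|y|^{-1}$ at infinity, hence lies in $L^2$; this is exactly the heuristic in the introduction, now made rigorous, and Remark~\ref{fComputation} (cited in the introduction) even gives that $f$ vanishes on $(-\infty,0)$ and decays exponentially, so $f$ is concentrated on $[0,\infty)$ as required for \eqref{hOrderSDDE}. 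Then I would compute $\mathcal{F}[\varpi_j]$ for the prescribed measures: $\mathcal{F}[\varpi_0](y)=-C_0+\mathcal{F}[f](y)=-C_0+R(-iy)-Q(-iy)^{-1}P(-iy)$ and $\mathcal{F}[\varpi_j](y)=-C_j$ for $1\le j\le m-1$. I would also need to check the hypotheses of Theorem~\ref{simpleFourierKernel}, namely that each $\varpi_j$ is a finite $n\times n$ measure on $[0,\infty)$ with second moment (immediate for the $\delta_0$ terms, and for the $f(u)\,du$ term follows from the exponential decay of $f$), and that $\det(h(iy))\neq 0$ for all $y\in\RR$ with the appropriate causality condition — the latter I address below.

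The core computation is to substitute these Fourier transforms into the symbol appearing in \eqref{higherOrderDetCond} and \eqref{hOrderFourierT}. Writing $R(-iy)=I_n(-iy)^{m}+\sum_{j=0}^{m-1}C_j(-iy)^j$, I expect the polynomial/rational matrix
\[
I_n(-iy)^m-\sum_{j=0}^{m-1}\mathcal{F}[\varpi_j](y)(-iy)^j
\]
to telescope: the $\sum_j C_j(-iy)^j$ coming from $\mathcal{F}[\varpi_0]$ through $R(-iy)$ reconstructs $R(-iy)-I_n(-iy)^m$ minus the $(-iy)^0$ term bookkeeping, while the explicit $-C_j$ terms for $j\ge1$ and the $-C_0$ in $\varpi_0$ cancel against it, leaving
\[
I_n(-iy)^m - \Bigl(R(-iy)-Q(-iy)^{-1}P(-iy)\Bigr)(-iy)^0 - \sum_{j=1}^{m-1}\bigl(-C_j\bigr)(-iy)^j - \bigl(-C_0\bigr),
\]
which after collecting the $R(-iy)=\sum C_j(-iy)^j + I_n(-iy)^m$ identity should simplify exactly to $Q(-iy)^{-1}P(-iy)$. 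Hence \eqref{hOrderFourierT} gives $\mathcal{F}[g_{1m}](y)=\bigl(Q(-iy)^{-1}P(-iy)\bigr)^{-1}=P(-iy)^{-1}Q(-iy)=\mathcal{F}[\tilde g](y)$, identifying $g_{1m}=\tilde g$; and \eqref{higherOrderDetCond} gives $\det h(iy)=\det\bigl(Q(-iy)^{-1}P(-iy)\bigr)\cdot(\text{unit})$, which is nonzero for all $y\in\RR$ precisely because $\det P(iy)\neq0$ (standing assumption) and $\det Q(iy)\neq0$ (from the invertibility hypothesis at $\Real(z)=0$). The same determinant identity extended to $\Real(z)\le 0$ gives the causality clause, using $\det P(z)\neq0$ and $\det Q(z)\neq0$ there.

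The main obstacle I anticipate is bookkeeping the telescoping algebra cleanly — in particular making sure the degree-$m$ leading term, the constant term absorbed into $\varpi_0$, and the shift between the ``$-z$'' convention in \eqref{higherOrderDetCond}--\eqref{hOrderFourierT} and the ``$-iy$'' convention in the polynomials all match up without a sign or index slip, and handling the fact that $\varpi_0$ is not a pure atom so that $\mathcal{L}[\varpi_0](z)$ must be manipulated as $-C_0+\mathcal{L}[f](z)$ with $\mathcal{L}[f](z)=R(z)-Q(z)^{-1}P(z)$ valid on the strip where everything converges (using that $f$ decays exponentially, so $D(\eta)$ contains a left half-plane neighborhood of the imaginary axis). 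A secondary point requiring a line or two of care is upgrading equality of $L^2$ Fourier transforms of $\tilde g$ and $g_{1m}$ to equality of the moving-average processes and thence to ``$(X_t)$ is \emph{the} solution'': this is where I invoke that $(Z_t)_{t\in\RR}$ is a regular integrator together with the uniqueness assertion already contained in Theorem~\ref{simpleFourierKernel}.
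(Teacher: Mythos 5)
Your proposal is correct and follows essentially the same route as the paper's proof: establish that $f$ with Fourier transform \eqref{fFourier} exists as a kernel concentrated on $[0,\infty)$ with enough decay so that $\varpi_0$ is a finite measure with second moment, observe that the symbol in \eqref{higherOrderDetCond}--\eqref{hOrderFourierT} collapses to $Q(-iy)^{-1}P(-iy)$, hence $\det h(iy)=\det P(-iy)/\det Q(-iy)\neq 0$ and $\mathcal{F}[g_{1m}](y)=P(-iy)^{-1}Q(-iy)=\mathcal{F}[\tilde g](y)$, and then invoke Theorem~\ref{simpleFourierKernel} for existence and uniqueness of the solution to \eqref{hOrderSDDE}. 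The only notable difference is how the support and integrability of $f$ are justified: the paper proves this directly via the vertical-line $L^2$ bound \eqref{keyfExist} and \cite[Lemma~5.1]{contARMAframework}, whereas you lean on the rational/state-space form recorded in Remark~\ref{fComputation}, whose content is indeed independent of the theorem, so this is a presentational rather than substantive deviation.
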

It follows from Theorem~\ref{MCARMAasMSDDE} that $p-q$ is the order of the (possibly multivariate) SDDE we can associate with a (possibly multivariate) CARMA process. Thus, this seems as a natural extension of \cite{contARMAframework}, where the univariate first order SDDE is studied and related to the univariate CARMA($p,p-1$) process.

\begin{remark}\label{CARMAasSDDEassump}
An immediate consequence of Theorem~\ref{MCARMAasMSDDE} is that we obtain an inversion formula for $(Z_t)_{t\in\mathbb{R}}$-driven MCARMA processes. In other words, it shows how to recover the increments of $(Z_t)_{t\in \mathbb{R}}$ from observing $(X_t)_{t\in \mathbb{R}}$. For this reason it seems natural to impose the invertibility assumption $\det (Q(z))\neq 0$ for all $z \in\mathbb{C}$ with $\text{Re}(z)\geq 0$, which is the direct analogue of the one for discrete time ARMA processes (or, more generally, moving averages). It is usually referred to as the minimum phase property in signal processing. The inversion problem for (L\'{e}vy-driven) CARMA processes has been studied in  \cite{contARMAframework,brockwellRecent, nonNegCARMA,brockwell2015prediction} and for (L\'{e}vy-driven) MCARMA processes in \cite{BrockwellSchlemmMCARMAinversion}. In both cases a different approach, which does not rely on MSDDEs, is used.  
\end{remark}

\begin{remark}\label{fComputation}
Since the Fourier transform $\mathcal{F}[f]$ of the function $f$ defined in Theorem~\ref{MCARMAasMSDDE} is rational, one can determine $f$ explicitly (e.g., by using the partial fraction expansion of $\mathcal{F}[f]$). Indeed, since the Fourier transform of $f$ is of the same form as the Fourier transform of the solution kernel $\tilde{g}$ of the MCARMA process we can deduce that
\begin{align}\label{fMCARMA}
f(t) = (e_1^q \otimes I_n)^T e^{Bt} F, \quad t \geq 0,
\end{align}
with
\begin{align*}
B = \begin{bmatrix}
0 & I_n & 0 & \cdots & 0 \\
0 & 0 & I_n &\cdots & 0\\
\vdots & \vdots & \ddots & \ddots & \vdots \\
0 & 0 & \cdots & 0 & I_n \\
-B_0 & -B_1 & \cdots & -B_{q-2} & -B_{q-1}
\end{bmatrix} \quad \text{and}\quad F= \begin{bmatrix}
F_1 \\ \vdots \\ F_q
\end{bmatrix},
\end{align*}
where $F(z) = F_1z^{q-1} + \cdots + F_q$ is chosen such that
\begin{align*}
z \mapsto Q(z) F(z) - [Q(z)R(z) - P(z)]z^q
\end{align*}
is at most of degree $q-1$ (see \eqref{definingMCARMAkernel} and \eqref{gMCARMA}).
\end{remark}

In Corollary~\ref{CARMAprediction} we formulate the prediction formula in Theorem \ref{Prediction} in the special case where $(X_t)_{t \in \RR}$ is a $(Z_t)_{t \in \RR}$-driven MCARMA process. In the formulation we use the definition 
\begin{align*}
\hat{Z}_u  = \mathbb{E}[Z_u-Z_s \mid Z_s -Z_r,\, r< s],\quad u >s,
\end{align*}
in line with \eqref{PredictionOfNoise}.

\begin{corollary}\label{CARMAprediction}
Let $(X_t)_{t \in \RR}$ be a $(Z_t)_{t \in \RR}$-driven MCARMA process and set
\begin{align*}
\tilde{g}_j(t) = (e_1^p \otimes I_n)^T e^{At} \sum_{k=j}^{p-q} A^{k-j}EC_k, \quad t \geq 0,
\end{align*}
for $j=1,\dots, p-q$, where $C_0,\dots,C_{p-q-1}$ are given in \eqref{residuePolyn} and $C_{p-q}=I_n$. Suppose that $\det (P(z))\neq 0$ and $\det (Q(z))\neq 0$ for all $z\in \CC$ with $\Real (z)\geq0$. Fix $s<t$. Then the following prediction formula holds 
\begin{align*}
\MoveEqLeft \EE [X_t \mid X_u, \, u\leq s] =   \sum_{j=1}^{p-q} \tilde{g}_j(t-s) X_s^{(j-1)} \\
&+ \int_{-\infty}^s \int_s^t \tilde{g}(t-u)  f(u-v) \, du \, X_v \, dv +  \tilde{g} \ast \{\hat{Z}\mathds{1}_{(s,\infty)} \}(t),
\end{align*}
where $\tilde{g}$ and $f$ are given in \eqref{gMCARMA} and \eqref{fMCARMA}, respectively, and
\begin{align*}
\tilde{g} \ast \{\hat{Z}\mathds{1}_{(s,\infty)} \}(t) = \mathds{1}_{\{p=q+1\}} \hat{Z}_u + (e_1^p \otimes I_n)^TAe^{At}\int_s^t e^{-Av}E \hat{Z}_v \, dv.
\end{align*}
%\big(g \ast \{\mathds{1}_{(s,\infty)}\hat{Z}\}(u)\big)_j &:= \sum_{k=1}^n \int_{[0,u-s)} \hat{Z}^k_{u-v}\, g_{jk}(dv)

\end{corollary}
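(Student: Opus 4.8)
The plan is to apply Theorem~\ref{Prediction} to the higher-order MSDDE into which the MCARMA process is nested by Theorem~\ref{MCARMAasMSDDE}, and then to read off the first $n$ coordinates. The present hypotheses subsume those of Theorem~\ref{MCARMAasMSDDE}, so $(X_t)_{t\in\RR}$ is the unique solution of \eqref{hOrderSDDE} with $m=p-q$, $\varpi_0(du)=-C_0\delta_0(du)+f(u)\,du$ and $\varpi_j=-C_j\delta_0$ for $1\le j\le m-1$; equivalently, by \eqref{etaHigherOrder}, the stacked process $\mathbf X_t=\big(X_t^T,(X^{(1)}_t)^T,\dots,(X^{(m-1)}_t)^T\big)^T$ solves the $mn$-dimensional equation \eqref{MultiSDDEcompact} with the block-companion delay $\eta$ and noise $\mathbf Z_t=(0,\dots,0,Z_t^T)^T$. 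I would then check the hypotheses of Theorem~\ref{Prediction} for this $mn$-dimensional system: $\eta$ has second moment because $f$ decays exponentially (Remark~\ref{fComputation}), and by \eqref{higherOrderDetCond}, \eqref{fFourier} and the fact that $\LL[f](z)=R(-z)-Q(-z)^{-1}P(-z)$ extends analytically to $\{\Real(z)\le0\}$, one computes $\det h(z)=\det\!\big(Q(-z)^{-1}P(-z)\big)=\det P(-z)/\det Q(-z)\neq0$ for $\Real(z)\le0$. Since $X^{(j)}_u$ is the almost sure limit of left difference quotients of $X^{(j-1)}$, the filtrations $\sigma(X_u:u\le s)$ and $\sigma(\mathbf X_u:u\le s)$ coincide, so the first $n$-dimensional block of the formula from Theorem~\ref{Prediction} applied to $\mathbf X$ equals $\EE[X_t\mid X_u,u\le s]$.

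With $g$ the $mn\times mn$ kernel satisfying $\FF[g]=h(i\cdot)^{-1}$, the three blocks of that formula must now be matched with the three terms in the statement. For $g(t-s)\mathbf X_s$ the first block is $\sum_{j=1}^m g_{1j}(t-s)X^{(j-1)}_s$, and the crux is to show $g_{1j}=\tilde g_j$. On one hand, the block-companion form of $h$ recorded before Theorem~\ref{simpleFourierKernel}, inserted into $h(iy)^{-1}h(iy)=I$ and read along the first block-row, yields a backward recursion which, together with $\FF[g_{1m}]=P(-iy)^{-1}Q(-iy)$ (Theorem~\ref{simpleFourierKernel} with \eqref{fFourier}), $\FF[\varpi_j]=-C_j$ and $\FF[\varpi_0]=-C_0+\FF[f]$, solves to $\FF[g_{1j}](y)=P(-iy)^{-1}Q(-iy)\sum_{k=j}^{m}(-iy)^{k-j}C_k$ with $C_m:=I_n$. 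On the other hand, from $\tilde g_j(t)=(e_1^p\otimes I_n)^Te^{At}\sum_{k=j}^{m}A^{k-j}EC_k$ and $\int_0^\infty e^{iyt}e^{At}\,dt=(-iyI-A)^{-1}$, I would use the resolvent identity $(wI-A)^{-1}A^{l}=w^{l}(wI-A)^{-1}-\sum_{i=0}^{l-1}w^{l-1-i}A^{i}$, the companion identity $(e_1^p\otimes I_n)^TA^{i}=(e_{1+i}^p\otimes I_n)^T$, and the vanishing of the leading blocks $E_1=\dots=E_{p-q-1}=0$ (which annihilates every correction term) to obtain the same rational function; as both kernels lie in $L^2$, this forces $g_{1j}=\tilde g_j$.

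For the delay term $\int_s^t g(t-u)\,\eta*\{\mathds{1}_{(-\infty,s]}\mathbf X\}(u)\,du$, every entry of $\eta$ outside its last block-row is an atom at $0$, and for $u>s$ the integration over $[u-s,\infty)$ in the definition of that convolution annihilates all atoms at $0$; hence $\eta*\{\mathds{1}_{(-\infty,s]}\mathbf X\}(u)$ is concentrated in its last block, where it equals $\int_{u-s}^\infty f(v)X_{u-v}\,dv=\int_{-\infty}^s f(u-v)X_v\,dv$. Pairing with the first block-row of $g$ leaves only $g_{1m}=\tilde g$, and Fubini yields $\int_{-\infty}^s\int_s^t\tilde g(t-u)f(u-v)\,du\,X_v\,dv$. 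For the noise term $g*\{\mathds{1}_{(s,\infty)}\hat{\mathbf Z}\}(t)$, $\hat{\mathbf Z}_u=(0,\dots,0,\hat Z_u^T)^T$, so again only $g_{1m}=\tilde g$ contributes; writing the Lebesgue--Stieltjes measure of $\tilde g$ as $\mathds{1}_{\{p=q+1\}}I_n\,\delta_0(dv)+(e_1^p\otimes I_n)^TAe^{Av}E\,\mathds{1}_{(0,\infty)}(v)\,dv$ — the jump of $\tilde g$ at $0$ being $(e_1^p\otimes I_n)^TE=E_1$, equal to $I_n$ when $p=q+1$ and to $0$ otherwise — and substituting $v\mapsto u-v$ produces $\mathds{1}_{\{p=q+1\}}\hat Z_t+(e_1^p\otimes I_n)^TAe^{At}\int_s^te^{-Av}E\hat Z_v\,dv$, as claimed. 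The main obstacle is the identification $g_{1j}=\tilde g_j$: the block-companion matrix $h$ must be inverted while respecting the noncommutativity of the matrix coefficients, and the cancellation in the resolvent expansion of $\tilde g_j$ hinges precisely on $E_1=\dots=E_{p-q-1}=0$ and on $\FF[f]$ being given by \eqref{fFourier}.
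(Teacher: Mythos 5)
Your proposal is correct and follows essentially the same route as the paper: apply Theorem~\ref{Prediction} to the MSDDE representation furnished by Theorem~\ref{MCARMAasMSDDE}, identify the first block row of $g$ with the kernels $\tilde{g}_j$, and compute the Lebesgue--Stieltjes measure of $\tilde{g}$ for the noise term. The only (minor) difference is in how the identification $g_{1j}=\tilde{g}_j$ is established: the paper's Lemma~\ref{FirstRowOfg} checks that the candidate row $(P^{-1}Q R_1,\dots,P^{-1}Q)$ multiplied by $h$ gives the first unit block row and then passes to the time domain via derivatives of $\tilde{g}$ and \eqref{gMCARMA}, whereas you match Fourier transforms directly using the resolvent identity and $E_1=\dots=E_{p-q-1}=0$ --- an equivalent computation.
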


%Remark~\ref{CARMAasSDDEassump} above points out how the dynamic representation of MCARMA processes (\ref{specificRelation}) given in Theorem~\ref{MCARMAasMSDDE} can be used to recover the underlying noise. The computational difficulty may lie in the determination of $f$, and hence 

%The function $f$ has an explicit expression as we will see in the remark below. 

\begin{example}

To illustrate the results above we will consider an $n$-dimensional $(Z_t)_{t \in \RR}$-driven MCARMA($3$,$1$) process $(X_t)_{t \in \RR}$ with $P$ and $Q$ polynomials given by 
\begin{align*}
\begin{aligned}
P(z) &= I_nz^3 + A_1 z^{2}+ A_2 z + A_3,\\
Q(z) &= B_0 + I_nz 
\end{aligned}
\end{align*} 
for matrices $B_0,A_1, A_2,A_3\in \mathbb{R}^{n \times n}$ such that $\det (P(z)) \neq 0$ and $\det (Q(z))\neq 0$ for all $z \in \mathbb{C}$ with $\text{Re}(z)\geq 0$. According to \eqref{gMCARMA}, $(X_t)_{t\in \mathbb{R}}$ may be written as
\begin{align*}
X_t = \int_{-\infty}^t (e_1^3\otimes I_n)^Te^{A(t-u)} E \, dZ_u
\end{align*}
where $E_1 = 0$, $E_2=I_n$, and $E_3 = B_0 - A_1$. With 
\begin{align*}
C_1 =  A_1-B_0, \quad C_0 = A_2 + B_0( B_0 - A_1)
\end{align*}
and 
\begin{align*}
F = B_0(A_2 - B_0 ( A_1- B_0))-A_3,
\end{align*}
Theorem~\ref{MCARMAasMSDDE} and Remark \ref{fComputation} imply that 
\begin{align*}
 dX^{(1)}_t =& - C_1 X^{(1)}_t\, dt -C_0 X_t\, dt  + \biggr(\int_0^\infty (FX_{t-u})^T e^{-B_0^Tu}\, du\biggr)^T\, dt + dZ_t.
\end{align*}
Moreover, by Corollary~\ref{CARMAprediction}, we have the prediction formula 
\begin{align*}
 \EE [X_t \mid X_u, \, u\leq s]  =& (e_1^3\otimes I_n)^T e^{At}\biggr[ (EC_1 + AE) X_s + E X_s^{(1)} \\
&+\int_s^t e^{-Au} E \biggr(e^{B_0 u}\int_{-\infty}^s e^{-B_0 v} F X_v\, dv + \hat{Z}_u \biggr)\, du\biggr].
\end{align*}
\end{example}

\section{Proofs and auxiliary results}\label{proofs}

We will start this section by discussing some technical results. These results will then be used in the proofs of all the results stated above.

Recall the function $h:D(\eta) \to \mathbb{C}^{n \times n}$ defined in \eqref{DefOfh}. Note that we always have  $\{z \in \mathbb{C}\, :\, \text{Re}(z) \leq 0\}\subseteq D(\eta)$ and $h(iy) = -iyI_n-\mathcal{F}[\eta](y)$ for $y\in \mathbb{R}$. Provided that $\eta$ is sufficiently nice, Proposition~\ref{gExistence} below ensures the existence of a kernel $g:\mathbb{R}\to \mathbb{R}^{n \times n}$ which will drive the solution to \eqref{MultiSDDEcompact}.

\begin{proposition}\label{gExistence} Let $h$ be given as in \eqref{DefOfh} and suppose that $\det (h(iy))\neq 0$ for all $y \in \mathbb{R}$. Then there exists a function $g=(g_{jk}):\mathbb{R}\to \mathbb{R}^{n \times n}$ in $L^2$ characterized by
\begin{align}\label{gKernelChar}
\mathcal{F}[g](y) = h(iy)^{-1}
\end{align}
for $y \in \mathbb{R}$. Moreover, the following statements hold:
\begin{enumerate}[(i)]
\item\label{functionRelation} The function $g$ satisfies
\begin{align*}
g(t-r) - g(s-r) = \mathds{1}_{(s,t]}(r) I_n  + \int_s^t g \ast \eta (u-r) \, du
\end{align*}
for almost all $r\in \mathbb{R}$ and each fixed $s<t$. 
\item\label{gMoments} If $\eta$ has moment of order $p \in \mathbb{N}$, then $g \in L^q$ for all $q \in [1/p,\infty]$, and
\begin{align}\label{gStieltjes}
g(t) = \mathds{1}_{[0,\infty)}(t)I_n + \int_{-\infty}^t g \ast \eta (u)\, du
\end{align}
for almost all $t \in \mathbb{R}$. In particular,
\begin{align}\label{minusIdentity}
\int_\mathbb{R}g \ast \eta (u)\, du = - I_n.
\end{align}
\item\label{gExponential} If $\int_{[0,\infty)}e^{\delta u }\, \vert \eta_{jk}\vert (du) < \infty$ for all $j,k=1,\dots, n$ and some $\delta >0$, then there exists $\varepsilon >0$ such that
\begin{align*}
\sup_{t \in \mathbb{R}}\max_{j,k =1,\dots,n} \vert g_{jk}(t)\vert e^{\varepsilon \vert t \vert} \leq C 
\end{align*}
for a suitable constant $C>0$.

\item\label{gCausality} If $\det (h(z)) \neq 0$ for all $z \in \mathbb{C}$ with $\text{Re}(z)\leq 0$ then $g$ is vanishing on $(-\infty,0)$ almost everywhere. 
\end{enumerate}
\end{proposition}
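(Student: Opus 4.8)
The plan is to construct $g$ from Plancherel's theorem, then read off (i) by matching Fourier transforms, deduce (ii) from (i) via a limiting argument backed by decay estimates, and obtain (iii)--(iv) by continuing $z\mapsto h(z)^{-1}$ holomorphically off the imaginary axis and shifting the contour in the Fourier inversion integral. Since $\eta$ is finite, $\mathcal{F}[\eta]$ is bounded and continuous, so $h(i\cdot)$ is continuous, $h(i\cdot)^{-1}$ is continuous by the hypothesis $\det(h(iy))\neq0$, and factoring out $-iy$ shows $\|h(iy)^{-1}\|=O(|y|^{-1})$ as $|y|\to\infty$. Hence $h(i\cdot)^{-1}\in L^2$, and Plancherel (applied entrywise) produces a unique $g\in L^2$ with $\mathcal{F}[g]=h(i\cdot)^{-1}$; it is real-valued because $h(-iy)=\overline{h(iy)}$.

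For (i), I would check that the two sides of the asserted identity, regarded as $L^2$-valued functions of $r$ for fixed $s<t$, have equal Fourier transforms. Computing transforms in $r$: that of $g(t-r)-g(s-r)$ is $(e^{iyt}-e^{iys})h(-iy)^{-1}$; that of $\mathds{1}_{(s,t]}(\cdot)I_n$ is $(iy)^{-1}(e^{iyt}-e^{iys})I_n$; and that of $r\mapsto\int_s^t(g\ast\eta)(u-r)\,du$ is $(iy)^{-1}(e^{iyt}-e^{iys})\,\mathcal{F}[\eta](-y)\,h(-iy)^{-1}$, where the interchange of $\int_s^t$ with the $\eta$-integral is legitimate because $g\ast\eta\in L^2$, $\eta$ is finite and $[s,t]$ is bounded. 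Subtracting, the identity reduces to $iy\,h(-iy)^{-1}=I_n+\mathcal{F}[\eta](-y)h(-iy)^{-1}$, i.e.\ to $h(-iy)h(-iy)^{-1}=I_n$ after inserting $h(-iy)=iyI_n-\mathcal{F}[\eta](-y)$. Since all four functions of $r$ lie in $L^2$, equality of transforms gives the a.e.\ identity.

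For (ii), a moment of order $p$ makes $\mathcal{F}[\eta]\in C^p$ with bounded derivatives, hence $h(i\cdot)^{-1}\in C^p$, and differentiating $h(iy)h(iy)^{-1}=I_n$ gives $\frac{d^k}{dy^k}h(iy)^{-1}=O(|y|^{-2})$ for $1\le k\le p$ while $h(iy)^{-1}=(-iy)^{-1}I_n+O(|y|^{-2})$; a standard integration-by-parts argument, carried out on the absolutely convergent piece $\mathcal{F}^{-1}[B(i\cdot)]$ of the decomposition below, then yields $|g(t)|=O(|t|^{-p})$. As $g\in L^\infty$ already holds under the standing assumptions (by that same decomposition, which needs only $\eta$ finite: it writes $g$ as $\mathds{1}_{[0,\infty)}e^{-\cdot}I_n$ plus the bounded function $\mathcal{F}^{-1}[B(i\cdot)]$), this gives $g\in L^q$ for all $q\in[1/p,\infty]$. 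Finally, \eqref{gStieltjes} follows by letting $s\to-\infty$ in part (i): the decay of $g$ and the moment of $\eta$ make $g\ast\eta$ integrable, so (after restricting to a full-measure set of $r$, where both sides are continuous off $\{s,t\}$) $\int_s^t(g\ast\eta)(u-r)\,du\to\int_{-\infty}^t(g\ast\eta)(u-r)\,du$ and $g(s-r)\to0$; evaluating at $r=0$ gives \eqref{gStieltjes} for $t\neq0$, and letting $t\to\infty$ with $g(t)\to0$ gives \eqref{minusIdentity}.

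For (iii)--(iv) the common step is that $z\mapsto h(z)^{-1}$ continues holomorphically across $i\RR$: under (iii) the exponential moment makes $\mathcal{L}[\eta]$, hence $h$, holomorphic on $\{|\Real z|<\delta\}$, and since $|\det h(iy)|\to\infty$ and $\det h$ has no zero on $i\RR$, a compactness argument provides $\varepsilon\in(0,\delta)$ with $\det h\neq0$ and $\|h(z)^{-1}\|=O((1+|z|)^{-1})$ on $\{|\Real z|\le\varepsilon\}$; under (iv) the hypothesis directly makes $h(\cdot)^{-1}$ holomorphic and $O((1+|z|)^{-1})$ on $\{\Real z\le0\}$. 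The main difficulty, which recurs throughout, is that $|z|^{-1}$ is not integrable, so neither the inversion integral nor the shifted contour integrals converge absolutely; the device is to subtract the explicit leading term by writing
\begin{align*}
h(z)^{-1}=\frac1{1-z}\,I_n+B(z),\qquad B(z):=\frac1{1-z}\bigl(I_n+\mathcal{L}[\eta](z)\bigr)h(z)^{-1},
\end{align*}
where $\frac1{1-z}I_n=\mathcal{L}[\mathds{1}_{[0,\infty)}e^{-\cdot}I_n](z)$ and $B$ is holomorphic on the same region with the integrable bound $\|B(z)\|=O(|z|^{-2})$. Then $\mathcal{F}^{-1}[\frac1{1-i\cdot}I_n]=\mathds{1}_{[0,\infty)}e^{-\cdot}I_n$, which vanishes on $(-\infty,0)$ and decays exponentially on $(0,\infty)$, while $\mathcal{F}^{-1}[B(i\cdot)]$ is evaluated by moving the contour (the connecting segments contribute nothing and the shifted integral is absolutely convergent, both because $\|B(z)\|=O(|z|^{-2})$): for (iv), pushing it arbitrarily far into $\{\Real z<0\}$ gives $\mathcal{F}^{-1}[B(i\cdot)](t)=0$ for $t<0$, so $g=0$ a.e.\ on $(-\infty,0)$; for (iii), pushing it to $\{\Real z=\varepsilon\}$ when $t>0$ and to $\{\Real z=-\varepsilon\}$ when $t<0$ gives $|\mathcal{F}^{-1}[B(i\cdot)](t)|\le Ce^{-\varepsilon|t|}$, and combining with the bound on the explicit term yields $\sup_t\max_{j,k}|g_{jk}(t)|\,e^{\varepsilon|t|}<\infty$. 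I expect this peeling-off of the leading term — together with the bookkeeping that $B$ inherits the holomorphy and that the subtracted term has exactly the stated inverse transform — to be the technical heart; the remaining delicate points are the Fubini interchange in (i) and the limits $s\to-\infty$, $t\to\infty$ in (ii), which is precisely where the moment hypotheses on $\eta$ are consumed.
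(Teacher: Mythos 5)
Your construction of $g$, your verification of (i), and your contour-shifting treatment of (iii)--(iv) based on the peel-off $h(z)^{-1}=\tfrac{1}{1-z}I_n+B(z)$ with $\|B(z)\|=O(|z|^{-2})$ are sound; for (iii)--(iv) this is genuinely different from the paper, which instead invokes a Hardy-space/Laplace-inversion lemma to get $t\mapsto g_{jk}(t)e^{\varepsilon t}\in L^1$ (resp.\ causality) and then upgrades to a pointwise exponential bound via the functional equation \eqref{gStieltjes} and integration by parts; your route is more self-contained and delivers the pointwise bound directly. The genuine gap is in (ii). A $p$-fold integration by parts on the absolutely convergent piece only yields the pointwise rate $|g_{jk}(t)|=O(|t|^{-p})$ (at best $o(|t|^{-p})$, by Riemann--Lebesgue), and this does \emph{not} give the endpoint $q=1/p$ of the claimed range, since $|t|^{-p\cdot(1/p)}=|t|^{-1}$ is not integrable at infinity. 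In particular, for $p=1$ your argument does not produce $g\in L^1$, which is part of the statement and is exactly what the paper later consumes (finiteness of the Lebesgue--Stieltjes measure $g(du)$ in Corollary~\ref{gMeasure}, etc.); moreover, with only $O(|t|^{-1})$ decay the integral $\int_{-\infty}^{t}g\ast\eta(u)\,du$ appearing in your passage from (i) to \eqref{gStieltjes} is not absolutely convergent, so the limit $s\to-\infty$ is not justified in that case either.

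The repair is available inside your own machinery, and it is precisely what the paper does: work with weighted $L^2$ decay rather than a pointwise rate. You already know that $D^p[B(i\cdot)]$ is continuous and $O(|y|^{-2})$, hence lies in $L^2$; Plancherel then gives $t\mapsto t^{p}\mathcal{F}^{-1}[B(i\cdot)](t)\in L^2$, and adding the exponentially decaying explicit piece yields $t\mapsto (1+|t|)^{p}g_{jk}(t)\in L^2$ (the paper obtains the same bound by differentiating $y\mapsto h(iy)^{-1}$ directly). H\"older with exponents $2/q$ and $2/(2-q)$ then gives
\begin{align*}
\int_\mathbb{R}|g_{jk}(t)|^{q}\,dt\le\biggl(\int_\mathbb{R}\bigl(g_{jk}(t)(1+|t|)^{p}\bigr)^{2}dt\biggr)^{q/2}\biggl(\int_\mathbb{R}(1+|t|)^{-\frac{2pq}{2-q}}dt\biggr)^{1-q/2}<\infty
\end{align*}
for all $q\in[1/p,2)$, because $\tfrac{2pq}{2-q}=\tfrac{2p}{2p-1}>1$ even at the endpoint $q=1/p$; combined with your $L^\infty$ bound this recovers the full range $[1/p,\infty]$. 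The same weighted bound makes $g\ast\eta$ integrable on half-lines (Cauchy--Schwarz against $(1+|t|)^{-p}$), which legitimizes the limits $s\to-\infty$ and $t\to\infty$ and hence \eqref{gStieltjes} and \eqref{minusIdentity} also when $p=1$.
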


\begin{proof}%[Proof of Proposition~\ref{gExistence}]
In order to show the existence of $g$ it suffices to argue that
\begin{align}\label{L2requirement}
y \mapsto \big(h(iy)^{-1} \big)_{jk} \text{ is in } L^2\text{ for } j,k=1,\dots, n,
\end{align}
since the Fourier transform $\mathcal{F}$ maps $L^2$ onto $L^2$. (Here $(h(iy)^{-1})_{jk}$ refers to the $(j,k)$-th entry in the matrix $h(iy)^{-1}$.) Indeed, in this case we just set $g_{jk} = \mathcal{F}^{-1}[(h(i\cdot)^{-1})_{jk}]$. 

Let $\widehat{h(iy)}$ denote the $n\times n$ matrix which has the same rows as $h(iy)$, but where the $j$-th column is replaced by the $k$-th canonical basis vector (that is, the vector with all entries equal to zero except of the $k$-th entry which equals one). Then it follows by Cramer's rule that
\begin{align*}
\big(h(iy)^{-1} \big)_{jk} = \frac{\det (\widehat{h(iy)})}{\det (h(iy))}.
\end{align*}
Recalling that $h(iy) = -iyI - \mathcal{F}[\eta](y)$ and that $\mathcal{F}[\eta](y)$ is bounded in $y$ we get by the Leibniz formula that $\vert \det (h(iy))\vert \sim \vert y\vert^n$ and $\vert \det ( \widehat{h(iy)})\vert = O( \vert y\vert^{n-1})$ as $\vert y\vert\to \infty$. This shows in particular that
\begin{align}\label{hInvAsymp}
\big\vert \big(h(iy)^{-1} \big)_{jk}\big\vert =O\big( \vert y \vert ^{-1}\big)
\end{align}
as $\vert y \vert \to \infty$. Since $j$ and $k$ were arbitrarily chosen we get by continuity of (all the entries of) $y \mapsto h(iy)^{-1}$ that \eqref{L2requirement} holds, which ensures the existence part. The fact that $\overline{\mathcal{F}[g](-y)}  = \mathcal{F}[g](y)$, $y \in \mathbb{R}$, implies that $g$ takes values in $\mathbb{R}^{n \times n}$.

To show \eqref{functionRelation}, we fix $s<t$ and apply the Fourier transform to obtain
\begin{align*}
\MoveEqLeft\mathcal{F}\biggr[g(t-\cdot) -g(s-\cdot) - \int_s^t g\ast \eta (u-\cdot)\, du\biggr] (y) \\
&= (e^{ity}-e^{isy}) \mathcal{F}[g] (-y) - \mathcal{F}[\mathds{1}_{(s,t]}](y)\mathcal{F}[g](-y)\mathcal{F}[\eta](-y) \\
&= \mathcal{F}[\mathds{1}_{(s,t]}](y) h(-iy)^{-1}(iyI-\mathcal{F}[\eta](-y)) \\
&= \mathcal{F}[\mathds{1}_{(s,t]}](y)I_n,
\end{align*}
which verifies the result. 

We will now show \eqref{gMoments} and for this we suppose that $\eta$ has a moment of order $p \in \mathbb{N}$. Then it follows that $\tilde{h}:y \mapsto h(iy)$ is (entry-wise) $p$ times differentiable with the $m$-th derivative given by 
\begin{align*}
 -\biggr(i\delta_0(\{m-1\}) + i^m\int_{[0,\infty)}e^{iuy}u^m\eta_{jk}(du)\biggr), \quad m=1,\dots, p,
\end{align*}
and in particular all the the entries of $(D^m\tilde{h})(y)$ are bounded in $y$. Observe that, clearly, if a function $A:\mathbb{R}\to \mathbb{C}^{n\times n}$ takes the form
\begin{align}\label{boundedFunc}
A(t) = B(t)C(t) D(t), \quad t \in \mathbb{R},
\end{align}
where all the entries of $B,D:\mathbb{R}\to \mathbb{C}^{n \times n}$ decay at least as $\vert y \vert^{-1}$ as $\vert y \vert \to \infty$ and all the entries of $C:\mathbb{R}\to \mathbb{C}^{n \times n}$ are bounded, then all the entries of $A$ decay at least as $\vert y \vert^{-1}$ as $\vert y \vert \to \infty$. Using the product rule for differentiation and the fact that
\begin{align*}
\big(D\tilde{h}^{-1}\big)(y) = -\tilde{h}(y)^{-1}(D\tilde{h})(y)\tilde{h}(y)^{-1}, \quad y \in \mathbb{R},
\end{align*}
it follows recursively that $D^m\tilde{h}^{-1}$ is a sum of functions of the form \eqref{boundedFunc}, thus all its entries decay at least as $\vert y \vert^{-1}$ as $\vert y \vert \to \infty$, for $m=1,\dots, p$. Since the entries of $D^m\tilde{h}^{-1}$ are continuous as well, they belong to $L^2$, and we can use the inverse Fourier transform $\mathcal{F}^{-1}$ to conclude that
\begin{align*}
\mathcal{F}^{-1}[D^p\tilde{h}] (t) = (it)^p \mathcal{F}^{-1}[\tilde{h}](t) = (it)^p g(t), \quad t \in \mathbb{R},
\end{align*}
is an $L^2$ function. This implies in turn that $t \mapsto g_{jk}(t)(1+ \vert t \vert)^p \in L^2$ and, thus,
\begin{align*}
\int_\mathbb{R}\vert g_{jk}(t)\vert^q\, dt\leq \biggr(\int_\mathbb{R}\big(g_{jk}(t)(1+ \vert t \vert)^p \big)^2\, dt \biggr)^{\tfrac{q}{2}} \biggr(\int_\mathbb{R} (1+ \vert t \vert)^{-\tfrac{2pq}{2-q}}\, dt \biggr)^{1-\tfrac{q}{2}}< \infty
\end{align*}
for any $q \in [1/p,2)$ and $j,k=1, \dots , n$. By using the particular observation that $g \in L^1$ and \eqref{functionRelation} we obtain that
\begin{align}\label{gFunctionalEq}
g(t) = \mathds{1}_{[0,\infty)}(t)I + \int_{-\infty}^t g\ast \eta (u)\, du
\end{align}
for (almost) all $t\in \mathbb{R}$. This shows that
\begin{align*}
\vert g_{jk}(t)\vert \leq 1 + \int_\mathbb{R} \vert (g\ast \eta (u))_{jk} \vert\, du
\leq 1 + \sum_{l=1}^n \int_\mathbb{R} \vert g_{jl}(u)\vert\, du\, \vert \eta_{lk}\vert ([0,\infty))
\end{align*}
for all $t\in \mathbb{R}$ and for every $j,k=1,\dots, n$ which implies $g \in L^\infty$ and, thus, $g\in L^q$ for all $q \in [1/p,\infty]$. Since $g(t) \to 0$ entrywise as $t\to \infty$, we get by \eqref{gFunctionalEq} that
\begin{align*}
\int_\mathbb{R} g \ast \eta (u)\, du = - I_n,
\end{align*}
which concludes the proof of \eqref{gMoments}.

Now suppose that $\int_{[0,\infty)}e^{\delta u}\, \vert \eta_{jk}\vert (du) < \infty$ for all $j,k = 1,\dots, n$ and some $\delta >0$. In this case, $\mathcal{S}_\delta :=\{z \in \mathbb{C}\, :\, \text{Re}(z) \in [-\delta,\delta]\}\subseteq D(\eta)$ and 
\begin{align*}
z \mapsto \det (h(z)) = \det \biggr(-zI-\int_{[0,\infty)} e^{zu}\, \eta (du)\biggr)
\end{align*}
is strictly separated from $0$ when $\vert z \vert$, $z\in \mathcal{S}_\delta$, is sufficiently large. Indeed, the dominating term in $\det (h(z))$ is $(-1)^nz^n$ when $\vert z \vert$ is large, since
\begin{align*}
\biggr\vert\biggr(\int_{[0,\infty)} e^{zu}\, \eta (du) \biggr)_{jk}\biggr\vert\leq \max_{l,m=1,\dots, n} \int_{[0,\infty)} e^{\delta u }\, \vert \eta_{lm}\vert (du)
\end{align*}
for $j,k=1,\dots, n$. Using this together with the continuity of $z \mapsto \det (h(z))$ implies that there exists $\tilde{\delta} \in (0,\delta]$ so that $z \mapsto \det (h(z))$ is strictly separated from $0$ on $\mathcal{S}_{\tilde{\delta}}:= \{z \in \mathbb{C}\, :\, \text{Re}(z)\in [-\tilde{\delta},\tilde{\delta}]\}$. In particular, $z \mapsto (h(z)^{-1})_{jk}$ is bounded on any compact set of $\mathcal{S}_{\tilde{\delta}}$, and by using Cramer's rule and the Leibniz formula as in \eqref{hInvAsymp} we get that $\vert (h(z)^{-1})_{jk}\vert =O( \vert z \vert^{-1})$ as $\vert z \vert \to \infty$ provided that $z \in \mathcal{S}_{\tilde{\delta}}$. Consequently,
\begin{align*}
\sup_{x \in [-\tilde{\delta},\tilde{\delta}]} \int_\mathbb{R}\big\vert \big(h(x+iy)^{-1}\big)_{jk} \big\vert^2\, dy <\infty,
\end{align*}
and this implies by \cite[Lemma~5.1]{contARMAframework} that $t\mapsto g_{jk}(t)e^{\varepsilon t}\in L^1$ for all $\varepsilon \in (-\tilde{\delta},\tilde{\delta})$. Fix any $\varepsilon \in (0,\tilde{\delta})$ and $j,k \in \{1, \dots, n\}$, and observe from \eqref{gFunctionalEq} that $g_{jk}$ is absolutely continuous on both $[0,\infty)$ and $(-\infty,0)$ with density $(g \ast \eta)_{jk}$. Consequently, for fixed $t >0$, integration by parts yields
\begin{align}\label{IBPpositive}
\vert g_{jk}(t)\vert e^{\varepsilon t} \leq \vert g_{jk}(0)\vert + \int_\mathbb{R} \vert (g\ast \eta (u))_{jk} \vert e^{\varepsilon u}\, du + \varepsilon \int_\mathbb{R} \vert g_{jk}(u)\vert e^{\varepsilon u}\, du.
\end{align}
Since
\begin{align*}
\int_\mathbb{R} \vert (g\ast \eta (u))_{jk} \vert e^{\varepsilon u}\, du \leq \sum_{l=1}^n \int_\mathbb{R}\vert g_{jl}(u)\vert  e^{\varepsilon u}\, du\, \int_{[0,\infty)} e^{\varepsilon u}\, \vert \eta_{lk}\vert (du)
\end{align*}
it follows from \eqref{IBPpositive} that
\begin{align*}
\max_{j,k = 1, \dots , n} \vert g_{jk} (t)\vert \leq C e^{-\varepsilon t}
\end{align*}
for all $t >0$ with
\begin{align*}
C&:= 1 \\
&+ \max_{j,k  = 1 ,\dots , n} \biggr(\sum_{l=1}^n\int_\mathbb{R} \vert g_{jl}(u)\vert e^{\varepsilon \vert u\vert }\, du\, \int_{[0,\infty)} e^{\varepsilon u }\, \vert \eta_{lk} \vert (du) + \varepsilon \int_\mathbb{R}\vert g_{jk}(u)\vert e^{\varepsilon \vert u \vert}\, du \biggr).
\end{align*}
By considering $-\varepsilon$ rather than $\varepsilon$ in the above calculations one reaches the conclusion that
\begin{align*}
\max_{j,k = 1, \dots , n} \vert g_{jk} (t)\vert \leq C e^{\varepsilon t}, \quad t<0,
\end{align*}
and this verifies \eqref{gExponential}. 

Finally, suppose that $\det (h(z))\neq 0$ for all $z \in \mathbb{C}$ with $\text{Re}(z) \leq 0$. Then it holds that $h$, and thus $z\mapsto h(z)^{-1}$, is continuous on $\{z\in \mathbb{C}\, :\, \text{Re}(z)\leq 0\}$ and analytic on $\{z\in \mathbb{C}\, :\, \text{Re}(z)< 0\}$. Moreover, arguments similar to those in \eqref{hInvAsymp} show that $\vert (h(z)^{-1})_{jk}\vert = O(\vert z\vert^{-1})$ as $\vert z \vert \to \infty$, and thus we may deduce that
\begin{align*}
\sup_{x<0}\int_\mathbb{R} \vert (h(x+iy)^{-1})_{jk}\vert\, dy <\infty. 
\end{align*}
From the theory on Hardy spaces, see \cite[Lemma~5.1]{contARMAframework}, \cite[Section~2.3]{Doetsch} or \cite{Dym:gaussian}, this implies that $g$ is vanishing on $(-\infty,0)$ almost everywhere, which verifies  \eqref{gCausality} and ends the proof.
\end{proof}

From Proposition~\ref{gExistence} it becomes evident that we may and, thus, do choose the kernel $g$ to satisfy \eqref{gStieltjes} pointwise, so that the function induces a finite Lebesgue-Stieltjes measure $g(du)$. We summarize a few properties of this measure in  the corollary below.

\begin{corollary}\label{gMeasure} Let $h$ be the function introduced in \eqref{DefOfh} and suppose that $\det (h(iy))\neq 0$ for all $y \in \mathbb{R}$. Suppose further that $\eta$ has first moment. Then the kernel $g:\mathbb{R}\to \mathbb{R}^{n\times n}$ characterized in \eqref{gKernelChar} induces an $n \times n$  finite Lebesgue-Stieltjes measure, which is given by
\begin{align}\label{specMeasure}
g(du) = I_n\delta_0 (du) + g\ast \eta (u)\, du.
\end{align}
A function $f=(f_{jk}):\mathbb{R}\to \mathbb{C}^{m \times n}$ is in $L^1(g(du))$ if 
\begin{align*}
\int_\mathbb{R} \vert f_{jl}(u) (g\ast \eta)_{lk}(u)\vert \, du < \infty, \quad l = 1, \dots, n,
\end{align*}
for $j=1,\dots, m$ and $k = 1, \dots , n$. Moreover, the measure $g(du)$ has $(p-1)$-th moment whenever $\eta$ has $p$-th moment for any $p \in \mathbb{N}$.
\end{corollary}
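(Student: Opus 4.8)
The plan is to read off all three assertions from Proposition~\ref{gExistence}\eqref{gMoments}, used with moment order $1$ for the first two statements and with moment order $p$ for the last.

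First I would establish \eqref{specMeasure}. Since $\eta$ has first moment, Proposition~\ref{gExistence}\eqref{gMoments} (with $p=1$) gives $g\in L^1\cap L^\infty$, the identity \eqref{gStieltjes} (which, following the remark preceding the corollary, I take to hold for every $t$), and---via \eqref{minusIdentity}---that $g\ast\eta\in L^1$. Hence $\mu(du):=I_n\delta_0(du)+g\ast\eta(u)\,du$ is an $n\times n$ matrix-valued signed measure with variation $I_n\delta_0+|g\ast\eta|\,du$, so it is finite. Now \eqref{gStieltjes} says precisely that $g(t)=\mu((-\infty,t])$ for every $t$; equivalently $g(b)-g(a)=\mu((a,b])$ for all $a<b$, so $g$ is a right-continuous function of bounded variation (vanishing at $\pm\infty$, the behaviour at $+\infty$ using \eqref{minusIdentity}) whose Lebesgue--Stieltjes measure is $\mu$. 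This is \eqref{specMeasure}.

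Next I would unwind the convention from Section~\ref{prel} to obtain the description of $L^1(g(du))$. There, $f=(f_{jk}):\mathbb{R}\to\mathbb{C}^{m\times n}$ lies in $L^1(g(du))=L^1(|g(du)|)$ iff $\int_\mathbb{R}|f_{jl}(u)|\,|g_{lk}|(du)<\infty$ for all $j=1,\dots,m$ and $l,k=1,\dots,n$. Since $\delta_0$ and Lebesgue measure are mutually singular, \eqref{specMeasure} gives $|g_{lk}|(du)=\delta_{lk}\delta_0(du)+|(g\ast\eta)_{lk}(u)|\,du$, hence
\[
\int_\mathbb{R}|f_{jl}(u)|\,|g_{lk}|(du)=\delta_{lk}\,|f_{jl}(0)|+\int_\mathbb{R}|f_{jl}(u)(g\ast\eta)_{lk}(u)|\,du .
\]
The point-mass term is finite because $f$ is $\mathbb{C}^{m\times n}$-valued, so membership in $L^1(g(du))$ is equivalent to finiteness of the absolutely continuous integrals $\int_\mathbb{R}|f_{jl}(u)(g\ast\eta)_{lk}(u)|\,du$, which is the stated condition.

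For the moment assertion, let $\eta$ have $p$-th moment, $p\in\mathbb{N}$; the case $p=1$ is already covered by finiteness of $g(du)$. For $p\ge 2$, since $|u|^{p-1}$ vanishes at $u=0$, \eqref{specMeasure} reduces the claim to $\int_\mathbb{R}|u|^{p-1}|(g\ast\eta)_{jk}(u)|\,du<\infty$. Writing $(g\ast\eta)_{jk}(u)=\sum_{l=1}^n\int_{[0,\infty)}g_{jl}(u-v)\,\eta_{lk}(dv)$, I would apply the triangle inequality, Tonelli's theorem, the substitution $w=u-v$ and the elementary bound $|w+v|^{p-1}\le c_p(|w|^{p-1}+|v|^{p-1})$ to bound this by a finite sum of products of the quantities $\int_\mathbb{R}|w|^{p-1}|g_{jl}(w)|\,dw$, $\|g_{jl}\|_{L^1}$, $\int_{[0,\infty)}|v|^{p-1}|\eta_{lk}|(dv)$ and $|\eta_{lk}|([0,\infty))$. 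The last two are finite because $\eta$ is finite with $p$-th moment; $\|g_{jl}\|_{L^1}<\infty$ by Proposition~\ref{gExistence}\eqref{gMoments}; and $\int_\mathbb{R}|w|^{p-1}|g_{jl}(w)|\,dw<\infty$ follows from Cauchy--Schwarz after writing $|w|^{p-1}|g_{jl}(w)|=\big(g_{jl}(w)(1+|w|)^p\big)\cdot\frac{|w|^{p-1}}{(1+|w|)^p}$, since $w\mapsto g_{jl}(w)(1+|w|)^p\in L^2$ by Proposition~\ref{gExistence}\eqref{gMoments} (applied with moment order $p$) and $\frac{|w|^{p-1}}{(1+|w|)^p}\le\frac{1}{1+|w|}\in L^2$. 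The whole argument is bookkeeping on top of Proposition~\ref{gExistence}; the only step needing genuine care is this last estimate, where plain integrability $g\in L^1$ does not suffice and one must invoke the weighted bound $g(\cdot)(1+|\cdot|)^p\in L^2$ together with the convolution/Tonelli manipulation.
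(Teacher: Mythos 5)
Your proposal is correct and follows essentially the same route as the paper: read \eqref{specMeasure} off \eqref{gStieltjes}, unwind the variation measure $|g_{lk}|(du)=\delta_0(\{l-k\})\delta_0(du)+|(g\ast\eta)_{lk}(u)|\,du$ for the integrability criterion, and bound $\int|u|^{p-1}|g_{jk}|(du)$ via the convolution structure together with the weighted $L^2$ bound on $g$ established in the proof of Proposition~\ref{gExistence}\eqref{gMoments}. The only (immaterial) difference is that you verify $\int_\mathbb{R}|w|^{p-1}|g_{jl}(w)|\,dw<\infty$ by a single global Cauchy--Schwarz with the weight $(1+|w|)^p$, whereas the paper splits the integral at $|u|=1$ and pairs $u^pg_{jl}(u)\in L^2$ with $u^{-2}$ on $\{|u|>1\}$.
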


\begin{proof}%[Proof of Corollary~\ref{gMeasure}] 
The fact that $g$ induces a Lebesgue-Stieltjes measure of the form \eqref{specMeasure} is an immediate consequence of \eqref{gStieltjes}. For a measurable function $f=(f_{jk}):\mathbb{R}\to \mathbb{C}^{m\times n}$ to be integrable with respect to $g(du) = (g_{jk}(du))$ we require that $f_{jl} \in L^1(\vert g_{lk}(du)\vert)$, $l=1,\dots, n$, for each choice of $j=1,\dots, m$ and $k=1,\dots, n$. Since the variation measure $\vert g_{lk}\vert (du)$ of $g_{lk}(du)$ is given by
\begin{align*}
\vert g_{lk}\vert(du) = \delta_0(\{l-k\})\delta_0(du) + \vert (g \ast \eta (u))_{lk}\vert\, du,
\end{align*}
we see that this condition is equivalent to the statement in the result. Finally, suppose that $\eta$ has $p$-th moment for some $p \in \mathbb{N}$. Then, for any $j,k \in \{1,\dots, n\}$, we get that
\begin{align*}
\int_\mathbb{R} \vert  u \vert^{p-1}\, \vert g_{jk}\vert (du) \leq \sum_{l=1}^n\bigg( &\vert \eta_{lk}\vert ([0,\infty)) \int_\mathbb{R} \vert u^{p-1} g_{jl}(u)\vert\, du \\
&+ \int_{[0,\infty)} \vert v \vert^{p-1}\, \vert \eta_{lk} \vert (dv)\, \int_\mathbb{R}  \vert g_{jl}(u)\vert\, du\biggr).
\end{align*}
From the assumptions on $\eta$ and Proposition~\ref{gExistence}(\ref{gMoments}) we get immediately that $\vert \eta_{lk}\vert ([0,\infty))$, $\int_{[0,\infty)}\vert v \vert^{p-1}\, \vert \eta_{lk} \vert (dv)$ and $\int_\mathbb{R}  \vert g_{jl}(u)\vert\, du$ are finite for all $l=1,\dots, n$. Moreover, for any such $l$ we compute that
\begin{align*}
\MoveEqLeft\int_\mathbb{R}\vert u^{p-1} g_{jl}(u)\vert\, du \\
&\leq \int_{\{\vert u \vert \leq 1\}}\vert u^{p-1}g_{jl}(u)\vert\, du +\biggr(\int_{\{\vert u \vert >1\}} u^{-2}\, du \biggr)^{\tfrac{1}{2}}\biggr( \int_{\{\vert u \vert >1\}}( u^p g_{jl}(u))^2 \, du\biggr)^{\tfrac{1}{2}}
\end{align*}
which is finite since $u \mapsto u^p g_{jl}(u)\in L^2$, according to the proof of Proposition~\ref{gExistence}(\ref{gMoments}), and hence we have shown the last part of the result. 
\end{proof}

We now give a result that both will be used to prove the uniqueness part of Theorem~\ref{existence} and Theorem~\ref{Prediction}. 

\begin{lemma}\label{variationOfConstants} Suppose that $\det (h(iy)) \neq 0$ for all $y \in \mathbb{R}$ and that $\eta$ is a finite measure with second moment, and let $g$ be given by \eqref{gKernelChar1}. Furthermore, let $(X_t)_{t\in \mathbb{R}}$ be a measurable process, which is bounded in $L^1(\mathbb{P})$ and satisfies \eqref{MSDDE} almost surely for all $s<t$. Then, for each $s\in \mathbb{R}$ and almost surely,
\begin{align}\label{varOfConstants}
\begin{aligned}
X_t =&\,  g(t-s) X_s +  \int_s^\infty g(t-u) \, \eta \ast \big\{\mathds{1}_{(-\infty,s]}X\big\} (u)\, du\\
 &+ g \ast \big\{\mathds{1}_{(s,\infty)}(Z-Z_s) \big\}(t) 
\end{aligned}
\end{align}
for Lebesgue almost all $t>s$, using the notation 
\begin{align*}
(\eta \ast \{\mathds{1}_{A}X\})_j (t) &:= \sum_{k=1}^n \int_{[0,\infty)} \mathds{1}_A (t-u)X^k_{t-u}\, \eta_{jk}(du) \quad \text{and} \\
(g\ast \{\mathds{1}_{(s,\infty)}(Z-Z_s) \})_j(t) &:=\sum_{k=1}^n \int_\mathbb{R}\mathds{1}_{(s,\infty)}(t-u)\big(Z^k_{t-u}-Z^k_s \big)\, g_{jk}(du)
\end{align*}
for $j=1,\dots, n$ and $t\in \mathbb{R}$. %In particular, if $\det (h(z)) \neq 0$ for all $z \in \CC$ with $\Real (z) \leq 0$ then
%\begin{align}\label{Yintermsofnoiseandpast}
%\begin{aligned}
%\MoveEqLeft Y_t =\,  g(t-s) Y_s + \int_s^t g(t-u) \, \eta \ast \big\{\mathds{1}_{(-\infty,s]}Y\big\} (u)\, du \\
% & + \left( \int_0^{t-s} (Z_{t-u} -Z_s)^T g^T(du) \right)^T. 
%\end{aligned}
%\end{align}
%and, if $(Z)_{t \in \RR}$ is a Lévy process, 
%\begin{align*}
%\MoveEqLeft \EE [Y_t \mid Y_u, \, u\leq s] =  g(t-s) Y_s + \int_s^t g(t-u) \eta \ast \big\{\mathds{1}_{(-\infty,s]}Y\big\} (u) \, du \\
%&+ \int_s^t g(t-u)\, du\,  \EE[Z_1].
%\end{align*}
\end{lemma}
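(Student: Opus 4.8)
The plan is to verify \eqref{varOfConstants} by a Fourier-transform computation in the spatial variable, exactly in the spirit of the proof of Proposition~\ref{gExistence}(\ref{functionRelation}). The starting point is the functional identity from Proposition~\ref{gExistence}(\ref{functionRelation}), namely that for fixed $s<t$,
\begin{align*}
g(t-r) - g(s-r) = \mathds{1}_{(s,t]}(r) I_n + \int_s^t g \ast \eta (u-r)\, du
\end{align*}
for almost all $r \in \mathbb{R}$. First I would multiply this identity on the right by $dZ_r$ (or, more precisely, integrate against the process increments in the regular-integrator sense, or alternatively against $X_r$ where convenient) and integrate over $r \in \mathbb{R}$. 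The term $\int_\mathbb{R} \mathds{1}_{(s,t]}(r) I_n\, dZ_r = Z_t - Z_s$ reproduces the noise increment; but since we are \emph{not} assuming a moving-average representation here, the cleaner route is to test the MSDDE \eqref{MSDDE} itself against the measure $g(t-\cdot)(du)$ rather than to manipulate $g\ast Z$. So the actual first step is: apply the Lebesgue--Stieltjes measure $g(t-\cdot)\,(du)$ (which is finite with first moment by Corollary~\ref{gMeasure}, since $\eta$ has second moment) to both sides of the relation $X_u - X_s = \eta\ast X$-term $+ (Z_u - Z_s)$, integrated appropriately, and use a deterministic/stochastic Fubini to interchange.

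Concretely, the key steps in order are: (1) Using \eqref{gStieltjes}–\eqref{specMeasure}, write $g(t-\cdot)$ as $I_n\delta_{t}(\cdot)$ pushed appropriately plus the absolutely continuous part $(g\ast\eta)(t-\cdot)$, so that integrating $X$ against $g(t-du)$ produces the boundary term $g(t-s)X_s$ plus $\int_s^\infty g(t-u)\,d(\text{something})$. (2) Insert the MSDDE relation \eqref{MSDDE} for $X_u - X_s$ inside the integral $\int_{(s,\infty)} (\cdots)\, g(t-du)$, splitting the right-hand side into the drift part $\sum_k\int_s^u \int_{[0,\infty)} X^k_{r-v}\eta_{jk}(dv)\,dr$ and the noise part $Z_u - Z_s$. (3) Apply Fubini to the drift part: the $dr$-integral and the $g(t-du)$-integral interchange, and after using the fundamental functional identity of Proposition~\ref{gExistence}(\ref{functionRelation}) again — essentially $\int g(t-u)\,g\ast\eta(u-r)\,du$ telescopes against $g(t-r)$ — the delay contributions from $(-\infty,s]$ survive as $\int_s^\infty g(t-u)\,\eta\ast\{\mathds{1}_{(-\infty,s]}X\}(u)\,du$ while the contributions from $(s,\infty)$ cancel against the $X$ terms, which is precisely the mechanism by which variation-of-constants formulas close up. (4) The noise part directly yields $g\ast\{\mathds{1}_{(s,\infty)}(Z-Z_s)\}(t)$ by definition of the Stieltjes convolution against $g(du)$.

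The main obstacle I expect is the bookkeeping in step (3): justifying the Fubini interchanges (both the deterministic one swapping $dr$ with $g(t-du)$, and the stochastic one when the noise increments enter) requires the integrability afforded by $\eta$ having a second moment — which gives $g \in L^1 \cap L^\infty$ with $g(du)$ finite and possessing a first moment (Corollary~\ref{gMeasure}) — together with the assumption that $(X_t)_{t\in\mathbb{R}}$ is bounded in $L^1(\mathbb{P})$, so that all the mixed integrals $\mathbb{E}\int\!\int |g_{jk}(t-u)|\,|X^l_{r-v}|\,|\eta|(dv)\,dr\,\cdots$ are finite. One has to be careful that $\eta$ is only assumed \emph{finite} in this lemma (unlike the possibly infinite-support case allowed in Theorem~\ref{existence}), which is what makes $\eta\ast X$ pointwise well-defined and keeps all the convolutions absolutely convergent. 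A secondary technical point is that the identity in Proposition~\ref{gExistence}(\ref{functionRelation}) holds only for a.e.\ $r$ and a.e.\ $t$, so the final statement \eqref{varOfConstants} is correspondingly an a.e.-in-$t$ statement, and one should keep the null sets straight; since all processes involved are measurable and $L^1$-bounded, Fubini lets one upgrade "a.e.\ $r$, almost surely" statements to "almost surely, a.e.\ $r$" as needed. Once the cancellation in step (3) is carried out carefully, the rest is routine.
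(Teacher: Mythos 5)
Your proposal is correct in substance but follows a genuinely different route from the paper's. The paper argues in the transform domain: since $\det(h(iy))\neq 0$ on the imaginary axis, there is a strip $-\delta<\Real(z)\leq 0$ on which $\mathcal{L}[g](z)=h(z)^{-1}$ (via \cite[Lemma~5.1]{contARMAframework}); taking Laplace transforms of $\mathds{1}_{(s,\infty)}X$ in \eqref{MSDDE} and splitting $\eta\ast X$ into its past and future parts gives $h(z)\mathcal{L}[\mathds{1}_{(s,\infty)}X](z)=\mathcal{L}\bigl[X_s\delta_0(\cdot-s)+\mathds{1}_{(s,\infty)}\,\eta\ast\{\mathds{1}_{(-\infty,s]}X\}\bigr](z)-z\mathcal{L}[\mathds{1}_{(s,\infty)}(Z-Z_s)](z)$, and \eqref{varOfConstants} follows by multiplying with $h(z)^{-1}=\mathcal{L}[g](z)$ and inverting, so the key cancellation is automatic. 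Your time-domain route also closes up: substituting $Z_u-Z_s=X_u-X_s-\int_s^u\eta\ast X(r)\,dr$ into $g\ast\{\mathds{1}_{(s,\infty)}(Z-Z_s)\}(t)$, using $g(du)=I_n\delta_0(du)+g\ast\eta(u)\,du$ from \eqref{specMeasure} and Fubini, the term $\int_s^\infty g\ast\eta(t-u)X_u\,du$ produced by the density part of $g(du)$ cancels, after a change of variables, against $\int_s^\infty g(t-r)\,\eta\ast\{\mathds{1}_{(s,\infty)}X\}(r)\,dr$ coming from the drift, the boundary term is $g(t-s)X_s$ because $g(v)\to 0$ as $v\to-\infty$, and the past contribution survives as stated. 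Your approach buys elementarity (no extension of the Laplace transform off the axis beyond what Proposition~\ref{gExistence} already delivers) and mirrors the Fubini computation the paper itself performs when verifying that $g\ast Z$ solves \eqref{MultiSDDEcompact}; the paper's transform argument buys precisely the avoidance of the bookkeeping you flag as the main obstacle. Two small inaccuracies, neither fatal: the identity actually driving the cancellation is \eqref{specMeasure}/\eqref{gStieltjes} together with the re-indexing just described, not a telescoping of $\int g(t-u)\,g\ast\eta(u-r)\,du$ against $g(t-r)$ (no such $g$-by-$g\ast\eta$ convolution arises in the derivation direction); and the finiteness of $\eta$ is not special to this lemma, since $\eta$ is a finite measure throughout the paper and ``infinite delay'' refers to unbounded support, so the hypotheses here coincide with those of Theorem~\ref{existence}. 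Finally, note that absent causality $g(du)$ charges $(-\infty,0)$, so the noise convolution runs over $v<t-s$ and the delay integral to $+\infty$, as your $\int_s^\infty$ correctly anticipates.
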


\begin{proof}%[Proof of Theorem \ref{variationOfConstants}]
By arguments similar to those in the proof of Proposition~\ref{gExistence}(\ref{gExponential}) we get that the assumption $\det (h(iy)) \neq 0$ for all $y \in \mathbb{R}$ implies that we can choose $\delta\in (0,\varepsilon)$, such that $\det (h(z))\neq 0$ for all $z \in \mathbb{C}$ with $-\delta<\text{Re}(z) \leq 0$ and
\begin{align*}
\sup_{x \in (-\delta ,0)} \int_\mathbb{R}\big\vert \big(h(x+iy)^{-1}\big)_{jk}\big\vert^2\, dy<\infty.
\end{align*}
for all $j,k=1,\dots, n$. Thus, \cite[Lemma~5.1]{contARMAframework} ensures that $\mathcal{L}[g](z)  = h(z)^{-1}$ when $\text{Re}(z) \in (-\delta,0)$. From this point we will fix such $z$ and let $s\in \mathbb{R}$ be given. Since $(X_t)_{t\in\mathbb{R}}$ satisfies \eqref{MultiSDDEcompact}, 
\begin{align*}
\mathds{1}_{(s,\infty)}(t)X_t = \mathds{1}_{(s,\infty)}(t)X_s + \int_{-\infty}^t\mathds{1}_{(s,\infty)}(u)\, \eta \ast X (u)\, du + \mathds{1}_{(s,\infty)}(t)(Z_t-Z_s)
\end{align*}
for Lebesgue almost all $t\in \mathbb{R}$ outside a $\mathbb{P}$-null set (which is a consequence of Tonelli's theorem). In particular, this shows that
\begin{align*}
\MoveEqLeft -z\mathcal{L}[\mathds{1}_{(s,\infty)}X](z) \\
=& -z\biggr\{ X_s\mathcal{L}[\mathds{1}_{(s,\infty)}](z) + \mathcal{L}\biggr[\int_{-\infty}^\cdot \mathds{1}_{(s,\infty)}(u)\,  \eta \ast X (u)\, du \biggr](z)\\
&+ \mathcal{L}[\mathds{1}_{(s,\infty)} (Z-Z_s)](z) \biggr\}\\
=& \mathcal{L}[X_s\delta_0(\cdot - s)](z) + \mathcal{L} [\mathds{1}_{(s,\infty)}\, \eta \ast X](z) - z\mathcal{L}[\mathds{1}_{(s,\infty)} (Z-Z_s)](z).
\end{align*}
By noticing that 
\begin{align*}
\mathcal{L}[\mathds{1}_{(s,\infty)}\, \eta \ast X](z) &= \mathcal{L} \big[\mathds{1}_{(s,\infty)}\, \eta \ast \big\{\mathds{1}_{(-\infty,s]}X\big\}\big](z) +  \LL \big[ \eta \ast \big\{\mathds{1}_{(s,\infty)}X\big\}\big](z)\\
&= \mathcal{L} \big[\mathds{1}_{(s,\infty)}\, \eta \ast \big\{\mathds{1}_{(-\infty,s]}X\big\}\big](z) +  \LL[ \eta](z)\LL[\big\{\mathds{1}_{(s,\infty)}X\big\}\big](z)
%=& \mathcal{L} \big[\eta \ast \big\{\mathds{1}_{(s,\infty)}Y\big\}\big](z) - \mathcal{L} \big[\mathds{1}_{(-\infty,s]}\, \eta \ast \big\{\mathds{1}_{(s,\infty)}Y\big\}\big](z) \\
%&+\mathcal{L} \big[\mathds{1}_{(s,\infty)}\, \eta \ast \big\{\mathds{1}_{(-\infty,s]}Y\big\}\big](z)\\
%=& \mathcal{L}[\eta](z) \mathcal{L}[\mathds{1}_{(s,\infty)}Y](z) + \mathcal{L}\big[\mathds{1}_{(s,\infty)}\, \eta \ast \big\{\mathds{1}_{(-\infty,s]}Y \big\} \big] (z)\\
%&- \mathcal{L}\big[\mathds{1}_{(-\infty,s]}\, \eta \ast \big\{\mathds{1}_{(s,\infty)}Y \big\} \big] (z),
\end{align*}
it thus follows that
\begin{align*}
\MoveEqLeft h(z) \mathcal{L}[\mathds{1}_{(s,\infty)} X](z) \\
=&  \mathcal{L}\big[X_s\delta_0(\cdot - s) + \mathds{1}_{(s,\infty)}\, \eta \ast \big\{\mathds{1}_{(-\infty,s]}X\big\} \big](z) - z \mathcal{L}[\mathds{1}_{(s,\infty)}(Z-Z_s)](z).
\end{align*}
(The reader should observe that since both $(X_t)_{t\in \mathbb{R}}$ and $(Z_t)_{t\in \mathbb{R}}$ are bounded in $L^1(\mathbb{P})$, the Laplace transforms above are all well-defined almost surely. We refer to the beginning of the proof of Theorem~\ref{existence} where details for a similar argument are given.) Now, using that $\mathcal{L}[g](z) = h(z)^{-1}$, we notice
\begin{align*}
-zh(z)^{-1}\mathcal{L}[\mathds{1}_{(s,\infty)}(Z-Z_s)](z) &= \mathcal{L}[g(du)](z) \mathcal{L}[\mathds{1}_{(s,\infty)}(Z-Z_s)](z) \\
&= \mathcal{L}\big[g \ast \big\{\mathds{1}_{(s,\infty)}(Z-Z_s) \big\}\big] (z),
\end{align*}
and thus
\begin{align*}
X_t =&\,  g(t-s) X_s + \int_s^\infty g(t-u) \, \eta \ast \big\{\mathds{1}_{(-\infty,s]}X\big\} (u)\, du + g \ast \big\{\mathds{1}_{(s,\infty)}(Z-Z_s) \big\}
\end{align*}
for Lebesgue almost all $t>s$ with probability one. %Here we have also used the identity given in (\ref{solutionForm}) with $Z$ replaced by $\tilde{Z}=\mathds{1}_{(s,\infty)}Z$.

\end{proof}
With Lemma~\ref{variationOfConstants} in hand we are now ready to prove the general result, Theorem~\ref{existence}, for existence and uniqueness of solutions to the MSDDE \eqref{MultiSDDEcompact}.
%Before proving Theorem~\ref{existence} we state a small result, which shows that any (nice) solution satisfies a certain variation of constants formula. This result is the key component in showing uniqueness of solutions to (\ref{MSDDEcompact}).

%\begin{remark} If the support of $\eta$ is contained in $[0,\infty)$, then $\eta \ast \{\mathds{1}_{(s,\infty}Y\}(u) = 0$ for all $u \leq s$, thus the relation (\ref{varOfConstants}) in Lemma~\ref{variationOfConstants} reduces to
%\begin{align*}
%\MoveEqLeft Y_t-Z_t\\
%=&\,  g(t-s) (Y_s-Z_s) + \int_s^\infty \big[g(t-u) \eta \ast \big\{\mathds{1}_{(-\infty,s]}Y\big\} (u)+ g \ast \eta (t-u) Z_u\big]\, du.
%\end{align*}
%If we further have that the causality assumptions (\ref{causI})-(\ref{causIII}) of Proposition~\ref{gCausality} are satisfied (cf. (\ref{causalityRemark}) in Remark~\ref{solutionStructure}), both $g$ and $g \ast \eta$ are vanishing on $(-\infty,0)$ almost everywhere, and we conclude that
%\begin{align*}
%\MoveEqLeft Y_t-Z_t\\
%= &\,  g(t-s) (Y_s-Z_s) + \int_s^t \big[g(t-u) \, \eta \ast \big\{\mathds{1}_{(-\infty,s]}Y\big\} (u)+ g \ast \eta (t-u)\, Z_u\big]\, du.
%\end{align*}
%\end{remark}

\begin{proof}[Proof of Theorem~\ref{existence}]
Fix $t\in \mathbb{R}$. The convolution in \eqref{solutionForm} is well-defined if $u \mapsto Z^T_{t-u}$ is $g^T$-integrable (by Corollary~\ref{gMeasure}) which means that $u \mapsto Z^k_{t-u}$ belongs to $L^1(\vert g_{jk}\vert(du))$ for all $j,k = 1,\dots, n$. Observe that, since $(Z^k_u)_{u \in\mathbb{R}}$ is integrable and has stationary increments, \cite[Corollary~A.3]{QOU} implies that there exists $\alpha,\beta>0$ such that $\mathbb{E}[\vert Z_u^k\vert]\leq \alpha + \beta \vert u \vert$ for all $u \in \mathbb{R}$. Consequently,
\begin{align*}
\mathbb{E}\biggr[\int_\mathbb{R}\vert Z_{t-u}^k \vert\, \mu (du) \biggr] \leq (\alpha + \beta \vert t \vert) \mu (\mathbb{R}) + \beta\int_{\mathbb{R}} \vert u \vert\, \mu (du) <\infty
\end{align*}
for any (non-negative) measure $\mu$ which has first moment. This shows that $u \mapsto Z^k_{t-u}$ will be integrable with respect to such measure almost surely, in particular with respect to $\vert g_{jk}\vert (du)$, $j=1,\dots ,n$, according to Corollary~\ref{gMeasure} as $\eta$ has second moment.

We will now argue that $(X_t)_{t\in \mathbb{R}}$ defined by \eqref{solutionForm} does indeed satisfy \eqref{MultiSDDEcompact}, and thus we fix $s<t$. Due to the fact that
\begin{align*}
\int_s^t X^T\ast \eta^T (u)\, du = \int_s^t Z^T\ast \eta^T (u)\, du + \int_s^t \biggr(\int_\mathbb{R} g\ast \eta (r) Z_{\cdot - r}\, du\biggr)^T\ast \eta^T (u)\, du
\end{align*}
it is clear by the definition of $(X_t)_{t\in \mathbb{R}}$ that it suffices to argue that
\begin{align*}
\MoveEqLeft\int_s^t \biggr(\int_\mathbb{R} g\ast \eta (r) Z_{\cdot - r}\, du\biggr)^T\ast \eta^T (u)\, du \\
&= \int_\mathbb{R} Z^T_r[g\ast \eta (t-r) - g\ast \eta (s-r)]^T\, dr - \int_s^tZ^T\ast \eta^T(r)\, dr.
\end{align*}
We do this componentwise, so we fix $i \in \{1, \dots, n\}$ and compute that
\begin{align*}
\MoveEqLeft\biggr(\int_s^t \biggr(\int_\mathbb{R}g \ast \eta (r)Z_{\cdot - r}\, dr \biggr)^T\ast \eta^T(u)\, du \biggr)_i\\
=& \sum_{j=1}^n\sum_{k=1}^n\sum_{l=1}^n \int_s^t \biggr( \int_\mathbb{R}g_{jl}\ast \eta_{lk}(v)Z^k_{\cdot - r}\, dr \biggr)\ast \eta_{ij}(u)\, du\\
=& \sum_{j=1}^n\sum_{k=1}^n\sum_{l=1}^n\int_\mathbb{R}Z^k_r\int_{[0,\infty)}\int_s^t \int_{[0,\infty)} g_{jl}(u-v - r - w)\, \eta_{ij}(dv)\, du\, \eta_{lk}(dw)\, dr\\
=& \sum_{k=1}^n\sum_{l=1}^n \int_\mathbb{R} Z_r^k \int_{[0,\infty)}\int_s^t (g\ast \eta)_{il}(u-r-w)\, du\, \eta_{lk}(dw)\, dr\\
=& \sum_{k=1}^n\sum_{l=1}^n \biggr(\int_\mathbb{R} Z^k_r \int_{[0,\infty)}[g_{il}(t-r-w) - g_{il}(s-r-w)]\, \eta_{lk}(dw)\, dr\\
& -\int_\mathbb{R} Z^k_r \int_{[0,\infty)}\delta_0(\{i-l\})\mathds{1}_{(s,t]}(r+w)\, \eta_{lk}(dw)\, dr \biggr)\\
=& \sum_{k=1}^n \biggr(\int_\mathbb{R}Z^k_r [(g \ast \eta)_{ik}(t-r)-(g \ast \eta)_{ik}(s-r)]\, dr- \int_s^t Z^k\ast \eta_{ik} (r)\, dr \biggr)\\
=& \biggr(\int_\mathbb{R} Z_r^T [g\ast \eta (t-r)-g \ast \eta (s-r)]^T\, dr -\int_s^t Z^T\ast \eta^T (r)\, dr \biggr)_i
\end{align*}
where we have used \eqref{functionRelation} in Proposition~\ref{gExistence} and the fact that $g$ and $\eta$ commute in a convolution sense, $g\ast \eta = (g^T \ast \eta^T)^T$ (compare the associated Fourier transforms).

Next, we need to argue that $(X_t)_{t\in\mathbb{R}}$ is stationary. Here we will use \eqref{minusIdentity} to write the solution as
\begin{align*}
X_t = \int_\mathbb{R}g\ast \eta (u)\, [Z_{t-u}-Z_t]\, du
\end{align*}
for each $t\in \mathbb{R}$. Fix $m \in \mathbb{R}$. Let $-m=t^k_0<t^k_1<\dots<t^k_k = m$ be a partition of $[-m,m]$ with $\max_{j=1,\dots, k}( t^k_j-t^k_{j-1})\to 0$, $k \to \infty$, and define the Riemann sum
\begin{align*}
X^{m,k}_t = \sum_{j=1}^k g\ast \eta (t^k_{j-1})\, [Z_{t-t^k_{j-1}}-Z_t]\,  (t^k_j-t^k_{j-1}).
\end{align*}
Observe that $(X^{m,k}_t)_{t\in \mathbb{R}}$ is stationary. Moreover, the $i$-th component of $X_t^{m,k}$ converges to the $i$-th component of
\begin{align*}
X^m_t= \int_{-m}^m g \ast \eta (u)\, [Z_{t-u}-Z_t]\, du
\end{align*}
in $L^1(\mathbb{P})$ as $k\to \infty$. To see this, we start by noting that
\begin{align*}
\mathbb{E}\big[\big\vert \big(X^m_t\big)_i - \big(X^{m,k}_t\big)_i \big\vert\big] \leq &
\sum_{j=1}^n \int_\mathbb{R}\sum_{l=1}^k\mathds{1}_{(t^k_{l-1},t^k_l]}(u)\mathbb{E}\Big[\big\vert (g\ast \eta)_{ij}(u) \big[Z^j_{t-u}-Z^j_t \big]\\&-(g\ast \eta)_{ij}\big(t_{l-1}^k\big)\big[Z^j_{t-t_{l-1}^k}-Z_t^j \big] \big\vert \Big]\, du.
\end{align*}
Then, for each $j \in \{1,\dots, n\}$,
\begin{align*}
 \max_{l=1,\dots, k}\mathds{1}_{(t^k_{l-1},t_l^k]}(u)\MoveEqLeft\mathbb{E}\Big[\big\vert (g\ast \eta)_{ij}(u) \big[Z^j_{t-u}-Z^j_t \big]-(g\ast \eta)_{ij}\big(t_{l-1}^k\big)\big[Z^j_{t-t_{l-1}^k}-Z_t^j \big] \big\vert \Big]\\
 \leq & \max_{l=1,\dots, k}\mathds{1}_{(t_{l-1}^k,t_l^k]}(u) \Big(\vert(g\ast \eta)_{ij}(u)\vert\, \mathbb{E}\big[\big\vert Z^j_{t-u} - Z^j_{t-t_{l-1}^k}\big\vert\big]\\
 &+ \mathbb{E}\big[\big\vert Z^j_{t-t_{l-1}^k} - Z^j_t \big\vert \big]\, \big\vert(g\ast \eta)_{ij}(u) - (g\ast \eta)_{ij}\big(t_{l-1}^k\big)\big\vert \Big) \to 0
\end{align*}
as $k \to \infty$ for almost all $u \in\mathbb{R}$ using that $(Z^j_t)_{t\in \mathbb{R}}$ is continuous in $L^1(\mathbb{P})$ (cf. \cite[Corollary~A.3]{QOU}) and that $(g\ast \eta)_{ij}$ is càdlàg. Consequently, Lebesgue's theorem on dominated convergence implies that $X^{m,k}_t \to X^m_t$ entrywise in $L^1(\mathbb{P})$ as $k\to \infty$, thus $(X^m_t)_{t\in \mathbb{R}}$ inherits the stationarity property from $(X^{m,k}_t)_{t\in \mathbb{R}}$. Finally, since $X^m_t \to X_t$ (entrywise) almost surely as $m \to\infty$, we obtain that $(X_t)_{t\in \mathbb{R}}$ is stationary as well.

To show the uniqueness part, we let $(U_t)_{t \in \mathbb{R}}$ and $(V_t)_{t \in \mathbb{R}}$ be two stationary, integrable and measurable solutions to \eqref{MultiSDDEcompact}. Then $X_t := U_t - V_t$, $t\in \mathbb{R}$, is bounded in $L^1(\mathbb{P})$ and satisfies an MSDDE without noise. Consequently, Lemma~\ref{variationOfConstants} implies that 
\begin{align*}
X_t = g(t-s) X_s + \int_s^\infty g(t-u) \eta \ast \{ \mathds{1}_{(-\infty,s]}X\}(u)\, du
\end{align*}
holds for each $s \in \RR$ and Lebesgue almost all $t >s$. For a given $j$ we thus find that 
\begin{align*}
\mathbb{E}\big[\big\vert X^j_t\big\vert\big] &\leq C\sum_{k=1}^n\biggr(\vert g_{jk}(t-s) \vert 
+ \sum_{l=1}^n \int_s^\infty \vert g_{jk}(t-u)\vert\, \vert \eta_{kl}\vert ([u-s,\infty))\, du
\biggr)
\end{align*}
where $C:= \max_k \mathbb{E}[\vert U_0^k\vert + \vert V_0^k\vert]$. It follows by Proposition~\ref{gExistence}(ii) that $g(t)$ converges as $t \to \infty$, and since $g \in L^1$ it must be towards zero. Using this fact together with Lebesgue's theorem on dominated convergence it follows that the right-hand side of the expression above converges to zero as $s$ tends to $-\infty$, from which we conclude that $U_t = V_t$ almost surely for Lebesgue almost all $t$. By continuity of both processes in $L^1(\mathbb{P})$ (cf. \cite[Corollary~A.3]{QOU}), we get the same conclusion for all $t$. 

Finally, under the assumption that $\det(h(z)) \neq 0$ for $z \in \CC$ with $\Real(z) \leq 0$ it follows from Proposition~\ref{gExistence}(\ref{gCausality}) that $g\ast \eta$ is vanishing on $(-\infty,0)$, and hence we get that the solution $(X_t)_{t\in \mathbb{R}}$ defined by \eqref{solutionForm} is causal since
\begin{align*}
X_t = Z_t + \int_0^\infty g \ast \eta (u)\, Z_{t-u}\, du = - \int_0^\infty g\ast \eta (u) [Z_t - Z_{t-u}]\, du
\end{align*}
for $t\in \mathbb{R}$ by \eqref{minusIdentity}.
\end{proof}

\begin{proof}[Proof of Theorem \ref{Prediction}]
Since $(X_t)_{t \in \RR}$ is a solution to an MSDDE, 
\begin{align*}
\sigma (X_u : u \leq s) = \sigma (Z_s -Z_u : u \leq s)
\end{align*}
and the theorem therefore follows by Lemma~\ref{variationOfConstants}.  
\end{proof}

\begin{proof}[Proof of Proposition \ref{MArep}]
We start by arguing why \eqref{FubiniRelation} is well-defined. To see that this is the case, note initially that $I_k(f_r(t-\cdot)) = Z_t^k-Z_{t-r}^k$ and thus, since $(Z^k_t)_{t\in \mathbb{R}}$ is integrable and has stationary increments, there exists $\alpha,\beta >0$ such that $\mathbb{E}[\vert I_k(f_r(t-\cdot))\vert]\leq \alpha + \beta \vert r \vert$ for all $r \in \mathbb{R}$ (see, e.g., \cite[Corollary~A.3]{QOU}). In particular
\begin{align*}
\mathbb{E}\biggr[\int_\mathbb{R}\vert I_k(f_r(t-\cdot))\vert\, \vert \mu \vert (dr) \biggr] \leq \alpha \vert \mu \vert (\mathbb{R}) + \beta \int_\mathbb{R} \vert r \vert \, \vert \mu \vert (dr) < \infty,
\end{align*}
which shows that $I_k(f_r(t-\cdot))$ is integrable with respect to $\mu$, thus the right-hand side of \eqref{FubiniRelation} is well-defined, almost surely for each $t \in \mathbb{R}$. To show that the left-hand side is well-defined, it suffices to note that $u \mapsto \int_\mathbb{R} f_r(u)\, \mu (dr)$ belongs to $L^1 \cap L^2$ by an application of Jensen's inequality and Tonelli's theorem.

To show \eqref{MArelation} we start by fixing $t \in \mathbb{R}$ and $j,k \in \{1,\dots, n\}$, and by noting that $\mu (dr) = (g\ast \eta)_{jk}(r)\, dr$ is a finite measure with having first moment according to Corollary~\ref{gMeasure}. Consequently, we can use assumptions (i)-(ii) on $I_k$ to get
\begin{align*}
\int_\mathbb{R} (g\ast \eta)_{jk}(r)\big[Z^k_{t-r}-Z^k_t \big]\, dr &= \int_\mathbb{R} I_k(\mathds{1}_{(t,t-r]})(g\ast \eta)_{jk}(r)\, dr\\
&=I_k \biggr(\int_\mathbb{R} \mathds{1}_{(t,t-r]} (g\ast \eta)_{jk}(r)\, dr \biggr)\\
&= I_k\biggr(\delta_0(\{j-k\})\mathds{1}_{[0,\infty)}(t-\cdot)+\int_{-\infty}^{t-\cdot} (g\ast \eta)_{jk}(u)\, du \biggr)\\
&= I_k (g_{jk}(t-\cdot))
\end{align*}
using \eqref{gStieltjes} and the convention that $\mathds{1}_{(a,b]} = -\mathds{1}_{(b,a]}$ when $a>b$. By combining this relation with \eqref{minusIdentity} and \eqref{solutionForm} we obtain
\begin{align*}
X^j_t = \sum_{k=1}^n \int_\mathbb{R} (g\ast \eta)_{jk}(r) [Z^k_{t-r}-Z^k_t]\, dr = \sum_{k=1}^n I_k (g_{jk}(t-\cdot)).
\end{align*}
\end{proof}

\begin{proof}[Proof of Proposition \ref{RiemannReg}]
Let $\alpha \in (1,2]$ and $\beta \in (0,1-1/\alpha)$, and consider a function $f:\mathbb{R}\to \mathbb{R}$ in $L^1\cap L^{\alpha}$. We start by noticing that 
\begin{align*}
\int_t^\infty \vert f(u) \vert (u-t)^{\beta -1}\, du =  \int_0^1 \vert f(t+u) \vert u^{\beta -1}\, du + \int_1^\infty \vert f(t+u) \vert u^{\beta -1}\, du.
\end{align*}
For the left term we find that  
\begin{align*}
\MoveEqLeft \int_\RR \biggr( \int_0^1 \vert f(t+u)\vert u^{\beta -1}\, du \biggr) ^\alpha\, dt \\
&\leq \biggr( \int_0^1 u^{\beta -1}\, du\biggr)^{\alpha-1} \int_\RR  \int_0^1 \vert f(t+u)\vert ^\alpha u^{\beta -1}\, du\, dt\\
& = \biggr(\int_0^1 u^{\beta -1}\, du\biggr)^\alpha \int_\RR \vert f(t) \vert^ \alpha\, dt < \infty.
\end{align*}
For the right term we find 
\begin{align*}
\MoveEqLeft \int_\RR \biggr(\int_1^\infty \vert f(t+u) \vert u^{\beta -1}\, du \biggr)^\alpha\, dt \\
&\leq \biggr( \int_\RR f(u) du \biggr)^{\alpha-1} \int_\RR \int_1^\infty \vert f(t+u) \vert u^{\alpha(\beta -1)}\, du\,  dt \\
& = \biggr( \int_\RR f(u) du \biggr)^{\alpha}  \int_1^\infty  u^{\alpha(\beta -1)}\, du <\infty.
\end{align*}
We conclude that $\big(I^{\beta}_-f\big)(u) \in L^{\alpha}$.
\end{proof}

\begin{proof}[Proof of Theorem~\ref{simpleFourierKernel}]
The identity \eqref{higherOrderDetCond} is just a matter of applying standard computation rules for determinants. For instance, one may prove the result when $z\neq 0$ by induction using the block representation
\begin{align}\label{blockH}
-h(z) = \begin{bmatrix} A & B \\ C & D \end{bmatrix}
\end{align}
with $A=I_nz$, $B= (e_1 \otimes I_n)^T \in \mathbb{R}^{n \times (m-1)n}$, $C= e_{m-1}\otimes \mathcal{L}[\varpi_0](z) \in \mathbb{R}^{(m-1)n \times n}$, and
\begin{align*}
D= \begin{bmatrix}
I_nz & I_n & 0 & \cdots & 0 \\
0 & I_n z & I_n & \cdots & 0 \\
\vdots &  \vdots & \ddots & \ddots & \vdots \\
0 & 0 & \cdots & I_n z & I_n \\
\mathcal{L}[\varpi_1](z) & \mathcal{L}[\varpi_2](z) & \cdots & \mathcal{L}[\varpi_{m-2}](z) & I_nz +\mathcal{L}[\varpi_{m-1}](z)
\end{bmatrix}.
\end{align*}
Here $e_1$ and $e_{m-1}$ refer to the firs and last canonical basis vector of $\mathbb{R}^{m-1}$, respectively. The case where $z=0$ follows directly from the Leibniz formula. In case $\det (h(iy))\neq 0$ for all $y \in \mathbb{R}$, we may write $h(iy)^{-1}$ as an $m \times m$ matrix, where each element $(h^{-1}(iy))_{jk}$ is an $n \times n$ matrix. We then know from Theorem~\ref{existence} that the unique solution to \eqref{hOrderSDDE} is a $(Z_t)_{t\in \mathbb{R}}$-driven moving average of the form \eqref{solutionTohOrder} with $\mathcal{F}[g_{1m}](y) = (h^{-1}(iy))_{1m}$. Similar to the computation of $\det (h(z))$, when $h(z)$ is invertible, block $(1,m)$ of $h(z)^{-1}$ can inductively be shown to coincide with
\begin{align*}
\biggr( I_n (-z)^m- \sum_{j=0}^{m-1} \mathcal{L}[\varpi_j] (z) (-z)^j\biggr)^{-1}
\end{align*}
using the representation \eqref{blockH} and standard rules for inverting block matrices. This means in particular that \eqref{hOrderFourierT} is true. 
\end{proof}

\begin{proof}[Proof of Theorem~\ref{MCARMAasMSDDE}]
We start by arguing that that there exists a function $f$ with the Fourier transform in \eqref{fFourier}. Note that, since $z \mapsto \det (Q(z))$ is just a polynomial (of order $nq$), the assumption that $\det (Q(z))\neq 0$ whenever $\text{Re}(z)\geq 0$ implies in fact that
\begin{align*}
H(z) := R(-z)-Q(-z)^{-1}P(-z) = Q(-z)^{-1}[Q(-z)R(-z) - P(-z)]
\end{align*}
is well-defined for all $z \in \mathcal{S}_\delta := \{x+iy\, :\, x \leq \delta,\, y \in \mathbb{R}\}$ and a suitably chosen $\delta>0$. According to \cite[Lemma~5.1]{contARMAframework} it suffices to argue that there exists $\varepsilon\in (0,\delta]$ such that
\begin{align}\label{keyfExist}
\sup_{x <\varepsilon} \int_\mathbb{R} \vert H(x+iy)_{jk}\vert^2\, dy< \infty
\end{align}
for all $j,k=1,\dots, n$. Let $\lVert \cdot \rVert$ denote any sub-multiplicative norm on $\mathbb{C}^{n\times n}$ and note that $\vert H(z)_{jk} \vert \leq \lVert Q(-z)^{-1} \rVert \lVert Q(-z)R(-z) - P(-z) \rVert$. Thus, since $\lVert Q(z)R(z) - P(z) \rVert \sim c_1\vert z \vert^{q-1}$ and $\lVert Q(z)^{-1} \rVert \sim c_2\vert z\vert^{-q}$ as $\vert z \vert \to \infty$ for some $c_1,c_2\geq 1$ (the former by the choice of $R$ and the latter by Cramer's rule), $\vert H(z)_{jk}\vert = O(\vert z  \vert^{-1})$. Consequently, the continuity of $H$ ensures that \eqref{keyfExist} is satisfied for a suitable $\varepsilon\in (0,\delta]$, and we have established the existence of $f$ with the desired Fourier transform. This also establishes that the $n\times n$ measures $\varpi_0,\varpi_1,\dots, \varpi_{p-q-1}$ defined as in the statement of the theorem are finite and have moments of any order. Associate to these measures the $n(p-q)\times n(p-q)$ measure $\eta$ given in \eqref{etaHigherOrder}. Then it follows from \eqref{higherOrderDetCond} that
\begin{align*}
\det (h(iy)) =  \det\biggr(I_n(-iy)^{p-q} + \sum_{j=0}^{p-q-1} R_j (-iy)^j - \mathcal{F}[f](y)\biggr) = \frac{\det (P(-iy))}{\det (Q(-iy))},
\end{align*}
and hence is non-zero for all $y \in \mathbb{R}$. In light of Proposition~\ref{simpleFourierKernel}, in particular \eqref{hOrderFourierT}, we may therefore conclude that the unique solution to \eqref{hOrderSDDE} is a $(Z_t)_{t\in \mathbb{R}}$-driven moving average, where the driving kernel has Fourier transform
\begin{align*}
\biggr(I_n (-iy)+ \sum_{j=0}^{p-q-1} R_j (-iy)^j - \mathcal{F}[f](y) \biggr)^{-1} = P(-iy)^{-1}Q(-iy)
\end{align*}
for $y \in \mathbb{R}$. In other words, the unique solution is the $(Z_t)_{t\in \mathbb{R}}$-driven MCARMA($p,q$) process associated to the polynomials $P$ and $Q$.
\end{proof}

Before giving the proof of Corollary \ref{CARMAprediction} we will need the following lemma:

\begin{lemma}\label{FirstRowOfg}
Let $C_0,\dots,C_{p-q-1}$ be given in \eqref{residuePolyn} and $C_{p-q} =I_n$. Define 
\begin{align*}
R_j(z) = \sum_{k = j}^{p-q} C_k z^{k-j} , \quad j=1,\dots,p-q-1.
\end{align*}
%Furthermore, let $h: \CC \to \CC^{n(p-q)\times n(p-q)}$ be given by 
%\begin{align*}
%h(-z)  =  \begin{bmatrix}
%I_n z & -I_n & 0 & \cdots & 0 \\
%0 & I_n z & -I_n & \cdots & 0 \\
%\vdots & \vdots & \ddots & \ddots & \vdots \\
%0 & 0 & \cdots & I_n z &  -I_n\\
% Q^{-1}(z)P(z) -zR_1(z) & C_1 & \cdots & C_{p-q-2} & I_n z+C_{p-q-1}
%\end{bmatrix}
%\end{align*}
%and $g : \RR \to \RR^{n\times n}$ be characterized by $\FF[g](y)=h(iy)^{-1}$. 
%\begin{align*}
%g_{1(p-q)}= (e_1 \otimes I_n)^T g (e_{p-q} \otimes I_n),
%\end{align*}
Then $\tilde{g}$ is $p-q-2$ times differentiable and $D^{p-q-2}\tilde{g}$ has a density with respect to the Lebesgue measure which we denote $D^{p-q-1}\tilde{g}$. Furthermore, we have that
\begin{align}\label{Firstthingtoprove}
(e_1^{p-q} \otimes I_n)^T g = (\tilde{g} R_1(D),\dots,\tilde{g} R_{p-q-1}(D),\tilde{g} )
\end{align}
where 
\begin{align}\label{Secondthingtoprove}
\begin{aligned}
\tilde{g} R_j(D)(t) &= \sum_{k = j}^{p-q} D^{k-j} \tilde{g} (t) C_k  \\
& = \mathds{1}_{[0,\infty)}(t) (e_1^p\otimes I_n)^T e^{At} \sum_{k=j}^{p-q} A^{k-j} E C_k
\end{aligned}
\end{align}
for $j=1,\dots,p-q-1$ and $g : \RR \to \RR^{n\times n}$ is characterized by $\FF[g](y)=h(iy)^{-1}$ with $h: \CC \to \CC^{n(p-q)\times n(p-q)}$ given by 
\begin{align*}
h(-z)  =  \begin{bmatrix}
I_n z & -I_n & 0 & \cdots & 0 \\
0 & I_n z & -I_n & \cdots & 0 \\
\vdots & \vdots & \ddots & \ddots & \vdots \\
0 & 0 & \cdots & I_n z &  -I_n\\
 Q^{-1}(z)P(z) -zR_1(z) & C_1 & \cdots & C_{p-q-2} & I_n z+C_{p-q-1}
\end{bmatrix}.
\end{align*}
\end{lemma}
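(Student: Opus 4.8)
The plan is to exploit the special block structure of $h$ together with the characterization of $\tilde g$ via $\mathcal F[\tilde g](y) = P(-iy)^{-1}Q(-iy)$ and the defining property of $R$, namely that $z\mapsto Q(z)R(z)-P(z)$ has degree at most $q-1$. First I would recall from the proof of Theorem~\ref{MCARMAasMSDDE} that, since $\det(Q(z))\neq 0$ for $\Real(z)\ge 0$, the function $z\mapsto Q(z)^{-1}P(z)$ is analytic and of polynomial growth on a strip $\mathcal S_\delta$, so the matrix $h(-z)$ displayed in the statement is well-defined there, and $\det(h(iy))=\det(P(-iy))/\det(Q(-iy))\neq 0$ by \eqref{higherOrderDetCond}. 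Consequently $g=\mathcal F^{-1}[h(i\cdot)^{-1}]$ exists in $L^2$ by Proposition~\ref{gExistence}, and the smoothness claims on $\tilde g$ follow from the fact that $\mathcal F[\tilde g](y) = P(-iy)^{-1}Q(-iy)$ decays like $|y|^{-(p-q)}$ (since $\deg P - \deg Q = p-q$ entrywise in the scalar determinant sense), so $y\mapsto (iy)^{p-q-1}\mathcal F[\tilde g](y)$ is still in $L^2$, giving the density $D^{p-q-1}\tilde g$; the second equality in \eqref{Secondthingtoprove} is then just differentiation of the representation \eqref{gMCARMA}, $\tilde g(t)=\mathds 1_{[0,\infty)}(t)(e_1^p\otimes I_n)^Te^{At}E$, using $D^k\tilde g(t) = (e_1^p\otimes I_n)^T A^k e^{At}E$ for $t>0$ (the boundary terms at $0$ vanish because $(e_1^p\otimes I_n)^TA^kE=0$ for $k<p-q-1$, which is exactly the order-of-differentiability statement).

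The core of the lemma is \eqref{Firstthingtoprove}. My approach is to verify it on the Fourier side: writing $v(y)^T := (e_1^{p-q}\otimes I_n)^T h(iy)^{-1}$ for the first block-row of the inverse, it suffices to show that $v(y)^T$ equals $\bigl(\mathcal F[\tilde g R_1(D)](y),\dots,\mathcal F[\tilde g R_{p-q-1}(D)](y),\mathcal F[\tilde g](y)\bigr)$, where $\mathcal F[\tilde g R_j(D)](y) = \sum_{k=j}^{p-q}(iy)^{k-j}\mathcal F[\tilde g](y)C_k$ by \eqref{Secondthingtoprove}. Equivalently, I would check that this candidate row vector $w(y)^T$ satisfies $w(y)^T h(iy) = (e_1^{p-q}\otimes I_n)^T = (I_n,0,\dots,0)$. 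Multiplying $w(y)^T$ into the columns of $h(iy)$ (with $z=-iy$, so the displayed $h(-z)=h(iy)$), the first column gives $\mathcal F[\tilde g R_1(D)](y)\cdot(I_nz) + \mathcal F[\tilde g](y)\bigl(Q(z)^{-1}P(z)-zR_1(z)\bigr)$ with $z=-iy$; using $\mathcal F[\tilde g](y)=P(z)^{-1}Q(z)$ and $\mathcal F[\tilde g R_1(D)](y)=P(z)^{-1}Q(z)R_1(z)$ this collapses to $P(z)^{-1}Q(z)R_1(z)z + P(z)^{-1}Q(z)Q(z)^{-1}P(z) - P(z)^{-1}Q(z)zR_1(z) = I_n$, as desired. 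For an interior column $\ell$ (with $2\le\ell\le p-q-1$) one gets $-\mathcal F[\tilde g R_{\ell-1}(D)](y) + (iy)\mathcal F[\tilde g R_\ell(D)](y) + \mathcal F[\tilde g](y)C_{\ell-1}$, and the telescoping identity $R_{\ell-1}(z) = zR_\ell(z) + C_{\ell-1}$ (immediate from the definition $R_j(z)=\sum_{k=j}^{p-q}C_kz^{k-j}$) makes this vanish after factoring out $P(z)^{-1}Q(z)$; the last column is handled identically using $R_{p-q-1}(z) = zR_{p-q}(z)+C_{p-q-1} = zI_n + C_{p-q-1}$. Since $w(y)^T h(iy) = (I_n,0,\dots,0)$ and $h(iy)$ is invertible, uniqueness of the solution gives $w(y)^T = (e_1^{p-q}\otimes I_n)^T h(iy)^{-1}$, and inverting the Fourier transform yields \eqref{Firstthingtoprove}.

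The one point requiring a little care — and the step I expect to be the main obstacle — is justifying that $h$ as displayed in the statement is genuinely the function attached via \eqref{DefOfh} and \eqref{etaHigherOrder} to the measures $\varpi_j$ of Theorem~\ref{MCARMAasMSDDE}, rather than a merely formal rearrangement: one must confirm that $-h(-z)$ has $(p-q,1)$ block equal to $\mathcal L[\varpi_0](-z) = -C_0 + \mathcal F[f]$ evaluated appropriately, and that $Q(z)^{-1}P(z) - zR_1(z)$ really coincides with $-(\,C_0 - (R(-z)-Q(-z)^{-1}P(-z))\,)$-type expression after the sign/argument bookkeeping; this is exactly where the identity $\mathcal F[f](y) = R(-iy) - Q(-iy)^{-1}P(-iy)$ from \eqref{fFourier} and the relation $R(z) = zR_1(z) + C_0$ (the $j=0$ case of the telescoping, with $R_0:=R$) enter. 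Once that bookkeeping is pinned down, everything else is the routine Fourier-side verification sketched above, and the regularity statements follow from the decay of $\mathcal F[\tilde g]$ as already noted.
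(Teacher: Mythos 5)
Your proposal is correct and follows essentially the same route as the paper: the paper's proof is precisely the Fourier-side verification that the row $\bigl(P^{-1}(-iy)Q(-iy)R_1(-iy),\dots,P^{-1}(-iy)Q(-iy)R_{p-q-1}(-iy),P^{-1}(-iy)Q(-iy)\bigr)$ multiplied by $h$ gives $(e_1^{p-q}\otimes I_n)^T$, with \eqref{Secondthingtoprove} read off from \eqref{gMCARMA}, which is exactly your column-by-column/telescoping computation spelled out in more detail (including your bookkeeping check that the displayed $h$ comes from $\varpi_0(du)=-C_0\delta_0(du)+f(u)\,du$ via \eqref{fFourier} and $R(z)=zR_1(z)+C_0$). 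The only cosmetic differences are that the paper deduces the differentiability of $\tilde g$ from \eqref{gStieltjes} rather than from the decay of $\mathcal{F}[\tilde g]$, and your intermediate formulas occasionally write $(iy)^{k-j}$ where the paper's Fourier convention gives $(-iy)^{k-j}=z^{k-j}$ — a sign slip that does not affect your final computations, which are carried out correctly in $z$.
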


\begin{proof}

That $\tilde{g}$ is $p-q-2$ times differentiable and $D^{p-q-2}\tilde{g}$ has a density with respect to the Lebesgue measure follows form the relation in \eqref{gStieltjes}. Furthermore, by Theorem~\ref{MCARMAasMSDDE} we know that $\FF[\tilde{g} ](y) = P^{-1}(-iy)Q(-iy)$. Consequently, \eqref{Firstthingtoprove} follows since 
\begin{align*}
\MoveEqLeft (P^{-1}(-iy)Q(-iy)R_1(-iy),\\
&\dots,P^{-1}(-iy)Q(-iy)R_{p-q-1}(-iy),P^{-1}(-iy)Q(-iy) )h(z) = (e_1^{p-q} \otimes I_n)^T.
\end{align*} 
The relation in \eqref{Secondthingtoprove} follows by the representation of $\tilde{g}$ given in \eqref{gMCARMA}. 
\end{proof}

\begin{proof}[Proof of Corollary \ref{CARMAprediction}]

The prediction formula is a consequence of Lemma~\ref{FirstRowOfg} combined with Theorem~\ref{Prediction} and Theorem~\ref{MCARMAasMSDDE}. Furthermore, to get the expression for $\tilde{g} \ast \{\hat{Z}\mathds{1}_{(s,\infty)} \}$, note that 
\begin{align*}
\tilde{g} (dv) =  \mathds{1}_{\{ p=q+1 \} } \delta_0 (dv) +(e_1^p \otimes I_n)^Te^{Av}AE  \, dv,
\end{align*}
which follows from the representation of $\tilde{g}$ in \eqref{gMCARMA}
\end{proof}
\subsection*{Acknowledgments}
This work was supported by the Danish Council for Independent Research (Grant DFF - 4002 - 00003).

\bibliographystyle{chicago}

\end{document}